 \newcommand{\whp}{with high probability}
 \newcommand{\V}{\mathcal{V}}
\newcommand{\W}{\mathcal{W}}
 \newcommand{\Po}[1]{\textrm{Po}\left(#1\right)}
 \newcommand{\Bin}[2]{\textrm{Bin}\left(#1,#2\right)}
 \newcommand{\E}{\mathbb{E}}
 \newcommand{\Var}{{\rm Var}}
\newcommand{\coupling}{\preceq}
\newcommand{\coup}{\preceq_{1-o(1)}}
\newcommand{\Pra}[1]{\Pr\left\{#1\right\}}
\newcommand{\Gn}[1]{G_2\left(n,#1\right)}
\newcommand{\Gh}[1]{G_3\left(n,#1\right)}
\newcommand{\Gnm}[2]{\mathcal{G}_{#1}\left(n,m,#2\right)}
\newcommand{\Gnmp}{\mathcal{G}\left(n,m,\overline{p}\right)}
\newcommand{\Gnmpp}{\mathcal{G}\left(n,m,p\right)}
\newcommand{\Gnmprim}[1]{\mathcal{G}\left(n,#1,p\right)}
\newcommand{\G}[2]{\mathbb{G}_{*#1}\left(n,#2\right)}
\newcommand{\Gsuma}{\Gn{\hat{p}_2}\cup\Gh{\hat{p}_3}}
\newcommand{\pdwa}{\hat{p}_2}
\newcommand{\ptrzy}{\hat{p}_3}
\newcommand{\phat}{\hat{p}}
\newcommand{\Gdelta}{\rg(n)_{\delta\ge k}}
\newcommand{\rg}{\mathbb{G}}
\theoremstyle{plain}
\newtheorem{thm}{Theorem}
\newtheorem{lem}{Lemma}
\newtheorem{cor}{Corollary}
\newtheorem{fact}{Fact}
\theoremstyle{definition}
\newtheorem{rem}{Remark}
\title{The coupling method for inhomogeneous random intersection graphs.}
\author{Katarzyna Rybarczyk\\
\small Faculty of Mathematics and Computer Science,\\
\small Adam Mickiewicz University, 60--769 Pozna\'n, Poland\\
\small \texttt{kryba@amu.edu.pl}}
\date{}
\begin{document}

\maketitle

\begin{abstract}
We present new results concerning threshold functions for a wide family of random intersection graphs. To this end we improve and generalise  the coupling method introduced for random intersection graphs so that it may be used for a wider range of parameters.  Using the new approach we are able to sharpen the best known results concerning homogeneous random intersection graphs and establish threshold functions for some monotone properties of inhomogeneous random intersection graphs. Considered properties are: $k$-connectivity, matching containment and hamiltonicity.  
\end{abstract}

\section{Introduction}\label{Introduction}

Since their introduction by Karo\'{n}ski, Scheinerman, and Singer--Cohen \cite{GpSubgraph} random intersection graphs have been attracting attention due to their interesting structure and wide applications. The random intersection graph model appears in problems concerning for example ''gate matrix layout'' for VLSI design (see e.g. \cite{GpSubgraph}), cluster analysis and classification (see e.g. \cite{RIGGodehardt1}), analysis of complex networks (see e.g. \cite{RIGClustering2, RIGTunableDegree}), secure wireless networks (see e.g. \cite{WSNphase2}) or epidemics (\cite{GpEpidemics}). Several generalisations of the model has been proposed, mainly in order to adapt it to use for a specific purpose.
In this paper we consider the $\Gnmp$ model studied for example in \cite{Gppdistance,GppDegrees, GppPhaseTransition, SpirakisNiezalezne}. 
Alternative ways of generalizing the model defined in \cite{GpSubgraph} are given for example in \cite{RIGTunableDegree} and \cite{RIGGodehardt1}.   

In a random intersection graph $\Gnmp$ there is  a set of $n$ vertices $\V=\{v_1,\ldots,v_n\}$,   an auxiliary set of $m=m(n)$ features  $\W=\{w_1,\ldots,w_{m(n)}\}$, and  a vector $\overline{p}=\overline{p}(n)=(p_1,\ldots,p_{m(n)})$ such that $p_i\in (0;1)$, for each $1\le i\le m$. To each vertex  $v\in\V$ we attribute a set of its features $W(v)\subseteq \W$ such that for each $i$, $1\le i\le m$, $w_i\in W(v)$ with probability $p_i$ independently of all other features and vertices. If $w\in W(v)$ then we say that $v$ {\em has chosen }$w$. Any two vertices $v,v'\in\V$ are connected by an edge in $\Gnmp$ if $W(v)$ and $W(v')$ intersect. If $\overline{p}(n)=(p,\ldots,p)$ for some $p\in (0;1)$ then $\Gnmp$ is a random intersection graph defined in \cite{GpSubgraph}. We denote it by $\Gnmpp$.

The random intersection graph model is very 
flexible and its properties change a lot if we alter the parameters. For example $\Gnmpp$ for some ranges of parameters behaves similarly to a random graph with independent edges (see \cite{GpEquivalence, GpEquivalence2}) but in some cases it exhibit large dependencies between edge appearance (see for example \cite{GpSubgraph,GpSubgraphPoisson}). It was proved in \cite{GpCoupling} that in both cases $\Gnmpp$  may be coupled with a random graph with independent edges so that with probability tending to $1$ as $n\to\infty$, $\Gnmpp$ is an overgraph of a graph with independent edges. It is also explained how this coupling may be used to obtain sharp results on threshold functions for $\Gnmpp$. Such  properties as connectivity, a Hamilton cycle containment or a matching containment are given as examples. In general, the coupling technique provides a very elegant method to get bounds on threshold functions for random intersection graphs for a large class of properties.  

The result presented in \cite{GpCoupling} is not sharp for some values of $n$, $m$ and $p$. Therefore it cannot be straightforward generalised to $\Gnmp$ with arbitrary $\overline{p}(n)$. In particular the method does not give sharp results for $np$ tending to a constant. In this article we  modify and extend the techniques used in \cite{GpCoupling} in order to overcome these constraints. First of all, to get the general result, we couple $\Gnmp$ with an auxiliary random graph which does not have fully independent edges. Therefore we need to prove some additional facts about the auxiliary random graph model. Moreover we need sharp bounds on the minimum degree threshold function for $\Gnmp$. Due to edge dependencies, estimation of moments of the random variable counting vertices with a given degree in $\Gnmp$ is complicated. Therefore we suggest a different approach to resolve the problem. We divide $\Gnmp$ into subgraphs so that the solution of a coupon collector problem combined with the method of moments provide the answer. The new approach to the coupling method allows to obtain better results on threshold functions for $\Gnmpp$ and by this means resolve open problems left over in \cite{GpCoupling}.   
 
Concluding, we provide a general method to establish bounds on threshold functions for many properties for $\Gnmp$. By means of the method we are able to obtain sharp thresholds for $k$--connectivity, perfect matching containment and hamiltonicity for the general model. Last but not least we considerably improve known results concerning $\Gnmpp$.

All limits in the paper are taken as $n\rightarrow \infty$.
Throughout the paper we use standard asymptotic notation
$o(\cdot)$, $O(\cdot)$, $\Omega(\cdot)$, $\Theta(\cdot)$, $\sim$, $\ll$, and $\gg$ defined as in \cite{KsiazkaJLR}. By $\Bin{n}{p}$ and $\Po{\lambda}$ we denote the binomial distribution with parameters $n$, $p$ and the Poisson distribution with expected value $\lambda$, respectively. We also use the phrase ``{\whp}'' to say with probability tending to one as $n$ tends to infinity.
All inequalities hold for $n$ large enough. If it does not influence the reasoning, for clarity we omit $\lfloor\cdot\rfloor$ and $\lceil\cdot\rceil$.

\section{Main Results}\label{SectionTwierdzenie}

In the article we compare random intersection graph $\Gnmp$ with a sum of a random graph with independent edges $G_2(n,\phat_2)$ and a random graph $G_3(n,\phat_3)$ constructed on the basis of a random
\linebreak 3--uniform hypergraph with independent hyperedges. Generally, for any
\linebreak $\phat=\phat(n)\in [0;1]$ and $i=2,\ldots,n$, let $H_i(n,\phat)$ be an $i$--uniform hypergraph with the vertex set $\V$ in which each $i$--element subset of $\V$ is added to the hyperedge set independently with probability $\phat$. $G_i(n,\phat)$ is a graph with the vertex set $\V$ and $\{v,v'\}$, $v,v'\in\V$, is an edge in $G_i(n,\phat)$ if there exists a hyperedge in $H_i(n,\phat)$ containing $v$ and $v'$.

We consider monotone graph properties of random graphs. 
For the family $\mathcal{G}$ of all graphs with the vertex set $\V$, we  call $\mathcal{A}\subseteq \mathcal{G}$ a property if it is closed under isomorphism. Moreover $\mathcal{A}$ is increasing if $G\in \mathcal{A}$ implies $G'\in \mathcal{A}$ for all $G'\in\mathcal{G}$ such that $E(G)\subseteq E(G')$. Examples of increasing properties are: $k$--connectivity, containing a perfect matching and containing a Hamilton cycle.

Let $\overline{p}=(p_1,\ldots,p_m)$ be such that $p_i\in (0,1)$, for all $1\le i\le m$. Define
\begin{equation}\label{RownanieS}
\begin{split}
S_1&=\sum_{i=1}^{m}np_i\left(1-(1-p_i)^{n-1}\right);\\
S_2&=\sum_{i=1}^{m}np_i\left(1-\frac{1-(1-2p_i)^{n}}{2np_i}\right);\\
S_3&=\sum_{i=1}^{m}np_i\left(\frac{1-(1-2p_i)^{n}}{2np_i}-(1-p_i)^{n-1}\right);\\
S_{1,t}&=\sum_{i=1}^{m}t\binom{n}{t}p_i^t(1-p_i)^{n-t}, \text{ for }t=2,3,\ldots,n.
\end{split}
\end{equation}
The following theorem is an extension of the result obtained in \cite{GpCoupling}. 

\begin{thm}\label{LematCoupling}
Let $\overline{p}=(p_1,\ldots,p_m)$ be such that $p_i\in (0,1)$, for all $1\le i\le m$ and $S_1$, $S_2$, and $S_3$  be given by \eqref{RownanieS}.
For a function $\omega$ tending to infinity let
\begin{equation}\label{RownanieHatp}
\begin{split}
\hat{p}&=
\frac{S_2-\omega\sqrt{S_2}-2S_2^2n^{-2}}{2\binom{n}{2}}
;\\
\hat{p}_2&=
\begin{cases} \frac{S_1-3S_3-\omega\sqrt{S_1}-2S_1^2n^{-2}}{2\binom{n}{2}},
&\text{for }S_3\gg \sqrt{S_1}
\text{ and }
\omega\ll S_3/\sqrt{S_1};\\
\frac{S_1-\omega\sqrt{S_1}-2S_1^2n^{-2}}{2\binom{n}{2}},
&\text{for }S_3=O(\sqrt{S_1});
\end{cases}
\\
\hat{p}_3&=
\begin{cases}
\frac{S_3-\omega\sqrt{S_1}-6S_3^2n^{-3}}{\binom{n}{3}},
\hspace{0.7cm}&
\text{for }S_3\gg \sqrt{S_1}
\text{ and }
\omega\ll S_3/\sqrt{S_1};\\
0, &\text{for }S_3=O(\sqrt{S_1}).
\end{cases}
\end{split}
\end{equation}
If $S_1\to \infty$ and $S_1=o\left(n^2\right)$ then for any increasing property $\mathcal{A}$. 
\begin{align}
\label{RownanieCoupling1}
\liminf_{n\to\infty}\Pra{\Gn{\hat{p}}\in \mathcal{A}}\le \limsup_{n\to\infty}\Pra{\Gnmp\in \mathcal{A}},\\
\label{RownanieCoupling2}
\liminf_{n\to\infty}\Pra{\Gn{\hat{p}_2}\cup\Gh{\hat{p}_3}\in\mathcal{A}}
\le
\limsup_{n\to\infty}\Pra{\Gnmp\in \mathcal{A}}.
\end{align}
\end{thm}

\begin{rem}
Assumption $S_1\to \infty$ is natural since for
 $S_1=o(1)$ {\whp} $\Gnmp$ is an empty graph. 
\end{rem}
\begin{rem}
$S_3$ is the expected number of edges in $\Gh{\hat{p}_3}$.
If\linebreak $S_3=O(\sqrt{S_1})$ then by Markov's inequality {\whp} the number of edges in $\Gh{\hat{p}_3}$ is at most $\omega\sqrt{S_1}$. Thus $S_2=S_1-S_3=S_1+O(\omega\sqrt{S_1})$ and the bound provided by \eqref{RownanieCoupling1} is as good as the one taking into consideration the edges from $\Gh{\hat{p}_3}$. 
\end{rem}
\begin{rem}
Theorem is also valid for $S_1=\Omega(n^2)$ but with  
\begin{equation*}
\begin{split}
\hat{p}&=1-\exp\left(-\frac{S_2-\omega\sqrt{S_2}}{2\binom{n}{2}}\right);\\
\hat{p}_2&=
\begin{cases} 1-\exp\left(-\frac{S_1-3S_3-\omega\sqrt{S_1}}{2\binom{n}{2}}\right),
&\text{for }S_3\gg \sqrt{S_1}
\text{ and }
\omega\ll S_3/\sqrt{S_1};\\
1-\exp\left(-\frac{S_1-\omega\sqrt{S_1}}{2\binom{n}{2}}\right),
&\text{for }S_3=O(\sqrt{S_1});
\end{cases}\\
\hat{p}_3&=
\begin{cases} 1-\exp\left(-\frac{S_3-\omega\sqrt{S_1}}{\binom{n}{3}}\right),
\hspace{0.7cm}&\text{for }S_3\gg \sqrt{S_1}
\text{ and }
\omega\ll S_3/\sqrt{S_1};\\
0,&\text{for }S_3=O(\sqrt{S_1}).
\end{cases}
\end{split}
\end{equation*}
\end{rem}

Denote by $\mathcal{C}_k$, $\mathcal{PM}$ and $\mathcal{HC}$ the following graph properties: a graph is $k$--connected, has a perfect matching and has a Hamilton cycle, respectively.  We will use Theorem~\ref{LematCoupling} to establish threshold functions for $\mathcal{C}_k$, $\mathcal{PM}$ and $\mathcal{HC}$ in $\Gnmp$. By $\mathcal{C}_k$ we denote the vertex connectivity. From the proof it follows that the threshold function for the edge connectivity is the same as this for $\mathcal{C}_k$.

For any sequence $c_n$ with a limit we write
\begin{equation}\label{RownanieFcn}
f(c_n)=
\begin{cases}
0&\text{for }c_n\to -\infty;\\
e^{-e^{-c}} &\text{for }c_n\to c\in (-\infty;\infty);\\
1&\text{for }c_n\to \infty.
\end{cases}
\end{equation}

\begin{thm}\label{TwierdzenieSpojnosc}
Let $\max_{1\le i\le m}p_i=o((\ln n)^{-1})$ and $S_1$ and $S_{1,2}$ be given by \eqref{RownanieS}.
\begin{itemize}
\item[(i)] If 
$
S_1=n(\ln n+ c_n), 
$
then
$$
\lim_{n\to\infty}\Pra{\Gnmp\in \mathcal{C}_1}=f(c_n),
$$
where $f(c_n)$ is given by \eqref{RownanieFcn}. 
\item[(ii)] Let $k$ be a positive integer and $a_n=\frac{S_{1,2}}{S_1}$. If 
$$
S_1=n(\ln n+(k-1)\ln\ln n + c_n), 
$$
then
$$
\lim_{n\to\infty}\Pra{\Gnmp\in\mathcal{C}_k}=
\begin{cases}
0&\text{ for }c_n\to -\infty\text{ and }a_n\to a\in (0;1];\\
1&\text{ for }c_n\to \infty.
\end{cases}
$$ 
\end{itemize}
\end{thm}
\smallskip
Assumption  $\max_{1\le i\le m}p_i=o((\ln n)^{-1})$ is necessary to avoid awkward cases. The problem is explained in more detail in Section~\ref{SectionStopnie}.
A straightforward corollary of the above theorem is that for $S_1=n(\ln n+c_n)$, $c_n\to  -\infty$ and any $k=1,2,\ldots,n$.
$$
\lim_{n\to\infty}\Pra{\Gnmp\in \mathcal{C}_k}=0\quad\text{and}\quad\lim_{n\to\infty}\Pra{\Gnmp\in \mathcal{HC}}=0.
$$

\begin{thm}\label{TwierdzeniePM}
Let $\max_{1\le i\le m}p_i=o((\ln n)^{-1})$ and $S_1$ be given by \eqref{RownanieS}. If 
$S_1=n(\ln n+c_n)$
then
$$
\lim_{n\to\infty}\Pra{\mathcal{G}(2n,m,\overline{p}(2n))\in\mathcal{PM}}=f(c_{2n}),
$$
where $f(\cdot)$ is given by \eqref{RownanieFcn}.
\end{thm}

\begin{thm}\label{TwierdzenieHamilton}
Let $\max_{1\le i\le m}p_i=o((\ln n)^{-1})$, $S_1$ and $S_{1,2}$ be given by \eqref{RownanieS} and $a_n=\frac{S_{1,2}}{S_1}$. If
$
S_1=n(\ln n+\ln\ln n + c_n), 
$
then
$$
\lim_{n\to\infty}\Pra{\Gnmp\in\mathcal{HC}}=
\begin{cases}
0&\text{ for }c_n\to -\infty\text{ and }a_n\to a\in (0;1];\\
1&\text{ for }c_n\to \infty.
\end{cases}
$$
\end{thm}
\noindent Already simple corollaries of Theorems~\ref{TwierdzenieSpojnosc}--\ref{TwierdzenieHamilton} give sharp threshold functions for $\Gnmpp$. For example.

\begin{cor}
Let $m\gg \ln^2 n$ and 
$p(1-(1-p)^{n-1})=(\ln n+c_n)/m$.
Then
$$
\lim_{n\to\infty}\Pra{\Gnmpp\in \mathcal{C}_1}=f(c_n)
$$
and
$$
\lim_{n\to\infty}\Pra{\mathcal{G}(2n,m,p)\in\mathcal{PM}}=f(c_{2n}),
$$
where $f(\cdot)$ is given by \eqref{RownanieFcn}.
\end{cor}
\noindent In particular we may state the following extension of the result from \cite{SingerPhD}.
\begin{cor}
Let $b_n$ be a sequence, $\beta$ and $\gamma$ be constants such that 
$
\beta\gamma(1-e^{-\gamma})=1.
$
If 
$$
m=
\beta n\ln n
\quad\text{
and}\quad
p=\frac{\gamma}{n}\left(1+\frac{b_n}{\ln n}\right)\quad\text{ then }
$$
$$
\lim_{n\to\infty}\Pra{\Gnmpp\in\mathcal{C}_1}
=f\left(\left(1+\frac{e^{-\gamma}\gamma}{1-e^{-\gamma}}\right)b_n\right)
$$
and
$$
\lim_{n\to\infty}\Pra{\mathcal{G}(2n,m,p(2n))\in\mathcal{PM}}
=f\left(\left(1+\frac{e^{-\gamma}\gamma}{1-e^{-\gamma}}\right)b_{2n}\right),
$$
where $f(\cdot)$ is given by \eqref{RownanieFcn}.
\end{cor}
Sometimes the method of the proof enables to improve results concerning $\Gnmpp$ even more. 
\begin{thm}\label{TwierdzenieHamiltonGp}
Let $m\gg \ln^2 n$
and 
\begin{equation}
\label{RownanieHamiltonGp}
p(1-(1-p)^{n-1})=
\frac{\ln n +\ln\left(\max\left\{1,\left(\frac{ npe^{-np}\ln n}{1-e^{-np}}\right)\right\}\right) + c_n}{m}.
\end{equation}
Then
$$
\lim_{n\to\infty}\Pra{\Gnmpp\in\mathcal{HC}}=
\begin{cases}
0&\text{ for }c_n\to -\infty;\\
1&\text{ for }c_n\to \infty.
\end{cases}
$$
\end{thm}

\begin{thm}\label{TwierdzenieSpojnosckGp}
Let $m\gg \ln ^2 n$, $k$ be a positive integer, and
$$
a_n=(np)^{k-1} \left( \left(\frac{e^{-np}\ln n}{1-e^{-np}}\right)^{k-1}+\frac{e^{-np}\ln n}{1-e^{-np}}\right)
$$
If 
\begin{equation*}
p(1-(1-p)^{n-1})
=
\frac{\ln n + \ln\left(\max\left\{1,a_n\right\}\right) + c_n}{m},
\end{equation*}
then
$$
\lim_{n\to \infty}\Pra{\Gnmpp\in \mathcal{C}_k}=
\begin{cases}
0&\text{ for }c_n\to -\infty;\\
1&\text{ for }c_n\to \infty.
\end{cases}
$$
\end{thm}
One of the question posed in \cite{GpCoupling} concerned the range of $m=m(n)$ for which the threshold function for $\mathcal{C}_k$ for $\Gnmpp$ coincides with this for $\delta(\Gnmpp)\ge 1$. Moreover we may ask when threshold function for $\mathcal{C}_k$ for $\Gnmpp$ is the same as this for $\mathcal{C}_k$ for $\Gn{\phat}$ with $\phat=mp^2$. Theorem~\ref{TwierdzenieSpojnosckGp} gives a final answer to these questions.
\begin{cor}
Let $k$ be a positive integer.
If 
\begin{equation*}
p(1-(1-p)^{n-1})=
\begin{cases}
\frac{\ln n + c_n}{m},
&\text{ for }\ln^2 n\ll m\ll \frac{n\ln n}{\ln\ln n}\\
\frac{\ln n + (k-1)\ln\ln n + c_n}{m},
&\text{ for }m=\Omega(n\ln n);
\end{cases}
\end{equation*}
  then
$$
\lim_{n\to \infty}\Pra{\Gnmpp\in\mathcal{C}_k}=
\begin{cases}
0&\text{ for }c_n\to -\infty;\\
1&\text{ for }c_n\to \infty.
\end{cases}
$$
\end{cor}

The remaining part of the article is organised as follows. Section~\ref{SectionCoupling} is devoted to a construction of the coupling of $\Gnmp$ and $\Gsuma$. The coupling is used in Section~\ref{SectionCoupling} to prove Theorem~\ref{LematCoupling}. Moreover in the subsequent sections the coupling will be applied in the proofs of the remaining theorems, especially Theorems~\ref{TwierdzenieHamiltonGp} and~\ref{TwierdzenieSpojnosckGp}. In Section~4 we give the threshold function for the minimum degree at least $k$ in $\Gnmp$. To this end we construct a coupling of a construction of $\Gnmp$ and a coupon collector process. Results included in Section~\ref{SectionStopnie} will be mainly used to prove $0$--statements of Theorems~\ref{TwierdzenieSpojnosc}--\ref{TwierdzenieSpojnosckGp}.  Section~5 is dedicated to the properties of $\Gsuma$. Results gathered in this section will be crucial in showing $1$-- statements of Theorems~\ref{TwierdzenieSpojnosc}--\ref{TwierdzenieSpojnosckGp}. In Section~6 we complete the proofs of Theorems~\ref{TwierdzenieSpojnosc}--\ref{TwierdzenieSpojnosckGp} using the results obtained in the previous sections.

\section{Coupling}\label{SectionCoupling}

In this section we present a proof of Theorem~\ref{LematCoupling}. In the proof we use auxiliary random graph models $\G{i}{M}$, $i=2,3,\ldots,n$, in which $M$ is
a random variable with non--negative integer values. For $i=2,\ldots,n$, $\G{i}{M}$ is constructed on the basis of a random hypergraph $\mathbb{H}_{*i}(n,M)$. $\mathbb{H}_{*i}(n,M)$ is a random hypergraph with the vertex set $\V$ in which a hyperedge set is constructed by sampling $M$ times with repetition elements from the set of all $i$--element subsets of $\V$ (all sets which are chosen several times are added only once to the hyperedge set). $\G{i}{M}$ is a graph with the vertex set $\V$ in which $v,v'\in\V$ are connected by an edge if $\{v,v'\}$ is contained in at least one of the hyperedges of $\mathbb{H}_{*i}(n,M)$. For simplicity of notation, if $M$ equals a constant $t$ with probability one or has the Poisson distribution, we write $\G{i}{t}$ or $\G{i}{\Po{\cdot}}$, respectively. Recall that similarly $G_i(n,\phat)$ is constructed on the basis of $H_i(n,\phat)$ -- a hypergraph with independent hyperedges (see definitions at the beginning of Section~\ref{SectionTwierdzenie}).

In this paper we treat random graphs as random variables. 
By a coupling $(\rg_1,\rg_2)$ of two random variables $\rg_1$ and $\rg_2$ we mean a choice of a probability space on which a random vector $(\rg_1',\rg_2')$ is defined and $\rg_1'$ and $\rg_2'$ have the same distributions as $\rg_1$ and $\rg_2$, respectively. For simplicity of notation we will not differentiate between $(\rg_1',\rg_2')$ and $(\rg_1,\rg_2)$.
For two random graphs $\rg_1$ and $\rg_2$ we write
$$
\rg_1\coupling \rg_2
$$
if
there exists a coupling $(\rg_1,\rg_2)$, such that in the probability space of the coupling $\rg_1$ is a subgraph of $\rg_2$ with probability $1$. 
Moreover, we write
$$
\rg_1=\rg_2,
$$
if $\rg_1$ and $\rg_2$ have the same probability distribution (equivalently there exists a coupling $(\rg_1,\rg_2)$ such that $\rg_1=\rg_2$ with probability one).
For two sequences of random graphs $\rg_1=\rg_1(n)$ and $\rg_2=\rg_2(n)$ we write
$$
\rg_1\coup \rg_2,
$$
if
there exists a sequence of couplings $(\rg_1(n),\rg_2(n))$, such that in the probability space of the coupling $\rg_1(n)$ is a subgraph of $\rg_2(n)$ with probability $1-o(1)$, respectively.

Note that, for any $\lambda$, in $\mathbb{H}_{*i}(n,\Po{\lambda})$ each edge appears independently with probability $1-\exp(-\lambda/{\textstyle \binom{n}{i}})$ (see \cite{GpEquivalence}). Thus
\begin{equation}\label{RownanieGstarPoisson}
\G{i}{\Po{\lambda}}=G_i\left(n,1-\exp\left(-\frac{\lambda}{\binom{n}{i}}\right)\right).
\end{equation}

We gather  here a few useful facts concerning couplings of random graphs. For proofs see~\cite{GpCoupling,GpEquivalence2}.
\begin{fact}\label{FaktCouplingGgwiazdka}
Let $M_n$ be a sequence of random variables and let $t_n$ be a sequence of positive integers.

(i) If
$
\Pra{M_n\ge t_n}=o(1)
$
then
$
\G{i}{M_n}\coup \G{i}{t_n}.
$

(ii) If
$
\Pra{M_n\le t_n}=o(1)
$
then
$
\G{i}{t_n}\coup \G{i}{M_n}.
$
\end{fact}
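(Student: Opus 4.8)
The plan is to derive both parts from a single \emph{monotone} coupling of the entire family $\left(\G{i}{k}\right)_{k\ge 0}$ in the parameter $k$, built on one probability space, and then to read off the two statements by comparing $M_n$ with $t_n$ on that space.

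First I would fix $i$ and let $A_1,A_2,\ldots$ be independent random variables, each distributed uniformly over the family of all $i$--element subsets of $\V$. For every non--negative integer $k$ let $\mathbb{H}_{*i}(n,k)$ be the hypergraph whose hyperedge set consists of the distinct sets among $A_1,\ldots,A_k$, and let $\G{i}{k}$ be the associated graph. This exactly realises the defining description of $\mathbb{H}_{*i}(n,k)$ as $k$ independent samples with repetition, so each $\G{i}{k}$ carries its correct marginal law. Since the hyperedge set of $\mathbb{H}_{*i}(n,k)$ is contained in that of $\mathbb{H}_{*i}(n,k')$ whenever $k\le k'$, on this common space we obtain the \emph{pathwise} monotonicity
\begin{equation*}
k\le k'\ \Longrightarrow\ \G{i}{k}\coupling\G{i}{k'}\quad\text{with probability one.}
\end{equation*}

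For part (i) I would additionally sample $M_n$ from its own distribution, independently of $A_1,A_2,\ldots$, and put $\rg_1=\G{i}{M_n}$ (the graph obtained from the first $M_n$ draws) and $\rg_2=\G{i}{t_n}$. Conditioning on the value of $M_n$ and using the independence shows that $\rg_1$ has the law of $\G{i}{M_n}$ and $\rg_2$ that of $\G{i}{t_n}$, so $(\rg_1,\rg_2)$ is a valid coupling. On the event $\{M_n\le t_n\}$ the monotonicity gives $\rg_1\coupling\rg_2$ surely, while $\Pra{M_n>t_n}\le\Pra{M_n\ge t_n}=o(1)$; hence $\rg_1$ is a subgraph of $\rg_2$ with probability $1-o(1)$, i.e.\ $\G{i}{M_n}\coup\G{i}{t_n}$. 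Part (ii) is symmetric: with the same construction set $\rg_1=\G{i}{t_n}$ and $\rg_2=\G{i}{M_n}$, note that $\rg_1\coupling\rg_2$ holds on $\{t_n\le M_n\}$, and that $\Pra{t_n>M_n}\le\Pra{M_n\le t_n}=o(1)$.

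The only step requiring genuine care is the marginal check in the previous paragraph: since $M_n$ is random, one must confirm that truncating the fixed i.i.d.\ sequence at the independent random index $M_n$ really reproduces the law of $\G{i}{M_n}$ and does not secretly correlate the number of draws with their values. This is exactly where independence of $M_n$ from $(A_j)_j$ enters, and it is the point I would spell out (here $t_n$ is the integer number of samples, so that $\G{i}{t_n}$ is well defined). Everything else --- the nesting of hyperedge sets under prefixes and the elementary tail comparisons $\Pra{M_n>t_n}\le\Pra{M_n\ge t_n}$ and $\Pra{t_n>M_n}\le\Pra{M_n\le t_n}$ --- is routine, so I anticipate no real obstacle: the entire content of the fact is that sampling with repetition is monotone under prefixes, and the $\coup$ relations then follow by absorbing the $o(1)$ probability of the ``wrong'' ordering of $M_n$ and $t_n$ into the error term.
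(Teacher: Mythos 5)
Your proof is correct. The paper does not prove this fact itself (it defers to \cite{GpEquivalence2,GpCoupling}), but your prefix-monotone coupling of the i.i.d.\ draws $A_1,A_2,\ldots$ is exactly the construction the paper relies on implicitly in Section~\ref{SectionCoupling}, where hyperedges are added by extra draws or deleted from the last draws to pass between $\G{i}{M_n}$ and $\G{i}{t_n}$; the only points worth keeping explicit are the ones you already flag, namely that $t_n$ must be (rounded to) a non-negative integer and that the marginal of $\G{i}{M_n}$ is recovered because $M_n$ is sampled independently of the draws.
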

\begin{fact}\label{FaktCouplingIndependent}
Let $(\rg_i)_{i=1,\ldots,m}$ and $(\rg'_i)_{i=1,\ldots,m}$ be sequences of independent random graphs. If
$$
\rg_i\coupling \rg_i', \text{ for all }i=1,\ldots,m,
$$
then
$$
\bigcup_{i=1}^{m}\rg_i\coupling \bigcup_{i=1}^{m}\rg'_i.
$$
\end{fact}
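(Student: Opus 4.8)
The plan is to construct the desired coupling of the two unions as a product of the $m$ individual couplings supplied by the hypothesis, exploiting the fact that the subgraph relation passes through unions. For each $i=1,\ldots,m$, the relation $\rg_i\coupling\rg_i'$ furnishes a probability space $(\Omega_i,\mathcal{F}_i,\Pr_i)$ carrying a pair $(\rg_i,\rg_i')$ with the prescribed marginal distributions and with $\rg_i$ a subgraph of $\rg_i'$ with probability one. I would then work on the product space $\Omega=\prod_{i=1}^m\Omega_i$ equipped with the product measure, viewing each pair $(\rg_i,\rg_i')$ as depending on the $i$-th coordinate alone.

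Next I would check that $\Omega$ carries the correct joint laws. Because the coordinates of $\Omega$ are independent, the graphs $\rg_1,\ldots,\rg_m$ defined on $\Omega$ are independent and each retains its original marginal distribution; since the family $(\rg_i)$ in the statement is itself independent, the joint law of $(\rg_1,\ldots,\rg_m)$ on $\Omega$ coincides with the original one, and hence $\bigcup_{i=1}^m\rg_i$ has the right distribution. The identical argument applied to the primed graphs gives the correct distribution for $\bigcup_{i=1}^m\rg_i'$.

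On $\Omega$ each event $\{\rg_i\subseteq\rg_i'\}$ has probability one, so their intersection does as well. Since $E(\rg_i)\subseteq E(\rg_i')$ for every $i$ forces $\bigcup_i E(\rg_i)\subseteq\bigcup_i E(\rg_i')$, it follows that $\bigcup_{i=1}^m\rg_i\subseteq\bigcup_{i=1}^m\rg_i'$ with probability one on $\Omega$. Together with the distributional identities of the previous step, this exhibits $\Omega$ as a coupling witnessing $\bigcup_{i=1}^m\rg_i\coupling\bigcup_{i=1}^m\rg_i'$.

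The only genuinely delicate point is the distributional bookkeeping in the second step: one must confirm that assembling the separate coupling spaces into a single product leaves the joint distribution of each family intact, and it is precisely here that the independence hypothesis is indispensable (without it the product measure need not reproduce the original joint laws). Everything else — the union map and the monotonicity of the subgraph relation under unions — is elementary, so I expect no further obstacle.
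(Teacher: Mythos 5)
Your proof is correct: the product-space coupling, the verification that independence of the two families makes the product measure reproduce the joint laws (hence the laws of the unions), and the almost-sure monotonicity of unions under the subgraph relation together give exactly the statement. The paper itself does not prove this fact but defers to the cited references, and your argument is the standard one those references supply, so there is nothing further to reconcile.
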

\begin{fact}\label{FaktPrzechodniosc}
Let $\rg_1=\rg_1(n)$,  $\rg_2=\rg_2(n)$, and $\rg_3=\rg_3(n)$ be random graphs. If
$$
\rg_1\coup \rg_2\quad\text{and}\quad \rg_2\coup \rg_3
$$
then
$$
\rg_1\coup \rg_3.
$$
\end{fact}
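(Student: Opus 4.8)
The plan is to use the standard gluing construction for two couplings that share a common marginal. By the hypothesis $\rg_1\coup\rg_2$ there is a probability space carrying a pair $(\rg_1',\rg_2')$ in which $\rg_1'$ is distributed as $\rg_1$, $\rg_2'$ as $\rg_2$, and $\Pra{\rg_1'\subseteq \rg_2'}=1-o(1)$; likewise $\rg_2\coup\rg_3$ furnishes a pair $(\rg_2'',\rg_3'')$ with the laws of $\rg_2$ and $\rg_3$ and $\Pra{\rg_2''\subseteq \rg_3''}=1-o(1)$. Since the middle coordinate carries the law of $\rg_2$ in both couplings, the goal is to build a single coupling of all three graphs whose $(\rg_1,\rg_2)$-marginal agrees with the first coupling and whose $(\rg_2,\rg_3)$-marginal agrees with the second; transitivity of the subgraph relation will then do the rest.

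Because every random variable here takes values in the finite family of graphs on the fixed vertex set $\V$, there is no measure-theoretic subtlety and the gluing can be written out explicitly. For graphs $g_1,g_2,g_3$ on $\V$ I would set
$$
q(g_1,g_2,g_3)=\frac{\Pra{\rg_1'=g_1,\rg_2'=g_2}\,\Pra{\rg_2''=g_2,\rg_3''=g_3}}{\Pra{\rg_2=g_2}}
$$
whenever $\Pra{\rg_2=g_2}>0$, and $q(g_1,g_2,g_3)=0$ otherwise; this is exactly the law obtained by first sampling the shared coordinate according to $\rg_2$ and then drawing the outer coordinates conditionally independently from the two given couplings. Summing $q$ over $g_3$, and using that $\rg_2''$ has the law of $\rg_2$, collapses the second factor to $\Pra{\rg_2''=g_2}=\Pra{\rg_2=g_2}$ and recovers $\Pra{\rg_1'=g_1,\rg_2'=g_2}$; summing over $g_1$ recovers $\Pra{\rg_2''=g_2,\rg_3''=g_3}$. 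Hence $q$ is a genuine probability law whose coordinates are distributed as $\rg_1$, $\rg_2$, $\rg_3$ and whose two prescribed pairwise marginals are reproduced, so it defines a valid coupling of $\rg_1$, $\rg_2$ and $\rg_3$.

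It then remains to bound the probability that $\rg_1\subseteq \rg_3$ in this glued space. By the matching marginals we have $\Pra{\rg_1\subseteq \rg_2}=1-o(1)$ and $\Pra{\rg_2\subseteq \rg_3}=1-o(1)$, so the union bound gives $\Pra{\rg_1\subseteq \rg_2 \text{ and } \rg_2\subseteq \rg_3}=1-o(1)$. On that event $\rg_1\subseteq \rg_2\subseteq \rg_3$ forces $\rg_1\subseteq \rg_3$, and therefore $\Pra{\rg_1\subseteq \rg_3}=1-o(1)$, which is precisely the assertion $\rg_1\coup \rg_3$.

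The argument is essentially routine, and the only step needing genuine care is verifying that the explicit law $q$ reproduces both prescribed pairwise marginals — equivalently, that re-coupling the outer coordinates conditionally on the shared middle coordinate distorts neither the law of $\rg_1$ nor that of $\rg_3$. Over a finite probability space this is just the short summation above; in a general non-discrete setting one would instead invoke the gluing lemma built from regular conditional distributions, but that generality is not needed here.
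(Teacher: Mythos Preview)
Your argument is correct: the explicit gluing construction of $q$ via conditional independence on the middle coordinate reproduces both pairwise marginals, and the union bound together with transitivity of the subgraph relation gives $\Pra{\rg_1\subseteq\rg_3}=1-o(1)$. The paper does not actually prove this fact; it defers to the references \cite{GpEquivalence2,GpCoupling}, so your write-up supplies what the paper omits rather than duplicating or deviating from an argument given here.
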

\begin{fact}\label{FaktCouplingPrawdopodobienstwa}
Let $\rg_1=\rg_1(n)$ and $\rg_2=\rg_2(n)$ be two random graphs, such that
\begin{equation}\label{RownanieG1G2}
\rg_1\coup \rg_2.
\end{equation}
Then for any increasing property $\mathcal{A}$
$$
\liminf_{n\to\infty}\Pra{ \rg_1(n)\in \mathcal{A}}\le \limsup_{n\to\infty}\Pra{\rg_2(n)\in \mathcal{A}}.
$$
\end{fact}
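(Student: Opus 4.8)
The plan is to treat both relations in \eqref{RownanieG1G2} simultaneously by working directly inside the probability space of the coupling, where $\rg_1$ and $\rg_2$ are defined on the same sample space while retaining their original marginal distributions. The single idea that drives everything is that monotonicity of $\mathcal{A}$ turns the coupling's subgraph relation into a set inclusion of events. Concretely, since $\rg_1$ and $\rg_2$ share the vertex set $\V$, the relation ``$\rg_1$ is a subgraph of $\rg_2$'' means $E(\rg_1)\subseteq E(\rg_2)$, and then $\rg_1\in\mathcal{A}$ forces $\rg_2\in\mathcal{A}$ because $\mathcal{A}$ is increasing. Hence, on the event that the coupling succeeds (i.e. that $\rg_1$ is indeed a subgraph of $\rg_2$), one has the inclusion $\{\rg_1\in\mathcal{A}\}\subseteq\{\rg_2\in\mathcal{A}\}$.

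First I would dispose of the exact case $\rg_1\coupling\rg_2$. Here the success event has probability one in the coupling space, so the inclusion above holds up to a null set, giving $\Pra{\rg_1\in\mathcal{A}}\le\Pra{\rg_2\in\mathcal{A}}$ for every fixed $n$. The key bookkeeping remark is that these are genuine marginal probabilities: by the definition of a coupling the distributions of $\rg_1$ and $\rg_2$ in the coupling space coincide with the original ones, so the inequality is a statement about $\rg_1(n)$ and $\rg_2(n)$ themselves. Next, for the approximate case $\rg_1\coup\rg_2$, I would let $B_n$ denote the failure event on which $\rg_1$ is not a subgraph of $\rg_2$; by definition $\Pra{B_n}=o(1)$. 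Splitting on $B_n$ and its complement and using the inclusion on $B_n^{c}$ yields
$$
\Pra{\rg_1\in\mathcal{A}}=\Pra{\rg_1\in\mathcal{A},\,B_n^{c}}+\Pra{\rg_1\in\mathcal{A},\,B_n}\le\Pra{\rg_2\in\mathcal{A}}+\Pra{B_n}=\Pra{\rg_2\in\mathcal{A}}+o(1).
$$

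Finally I would pass to the limit. Setting $a_n=\Pra{\rg_1(n)\in\mathcal{A}}$ and $b_n=\Pra{\rg_2(n)\in\mathcal{A}}$, both cases deliver $a_n\le b_n+o(1)$. Taking $\liminf$ and using that an additive $o(1)$ term does not affect it, $\liminf_{n}a_n\le\liminf_{n}(b_n+o(1))=\liminf_{n}b_n\le\limsup_{n}b_n$, which is exactly the asserted inequality.

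I do not expect a genuine obstacle here; the argument is elementary once the coupling is in hand. The two points that must be stated with care are, first, the monotonicity step, namely that a common vertex set together with $E(\rg_1)\subseteq E(\rg_2)$ lets the increasing property propagate membership from $\rg_1$ to $\rg_2$; and second, that passing from the coupling's marginals back to the original random graphs is legitimate. A minor subtlety worth noting in the approximate case is that the failure event $B_n$ may be arbitrarily correlated with the event $\{\rg_1\in\mathcal{A}\}$, but since only an upper bound is needed, the crude splitting $\Pra{\rg_1\in\mathcal{A},\,B_n}\le\Pra{B_n}$ suffices and no independence is required.
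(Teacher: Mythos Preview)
Your proof is correct and follows essentially the same approach as the paper. Both arguments work inside the coupling space, use monotonicity of $\mathcal{A}$ to obtain the inclusion $\{\rg_1\in\mathcal{A}\}\cap\{\rg_1\subseteq\rg_2\}\subseteq\{\rg_2\in\mathcal{A}\}$, and deduce $\Pra{\rg_1\in\mathcal{A}}\le\Pra{\rg_2\in\mathcal{A}}+o(1)$ before passing to the limit; the only cosmetic difference is that the paper writes the bound via conditional probabilities and inclusion--exclusion, whereas you split directly on the failure event $B_n$.
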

\begin{proof}
Define event
$
\mathcal{E}:=\{\rg_1\subseteq \rg_2\}
$
on a probability space of a coupling $(\rg_1,\rg_2)$ existing by~\eqref{RownanieG1G2}. 
Then for any increasing property $\mathcal{A}$
\begin{align*}
\Pra{\rg_2\in \mathcal{A}}
&\ge\Pra{ \rg_2\in \mathcal{A}|\mathcal{E}\}\Pr\{\mathcal{E}}
\\
&\ge\Pra{ \rg_1\in \mathcal{A}|\mathcal{E}\}\Pr\{\mathcal{E}}
\\
&=\Pra{ \{\rg_1\in \mathcal{A}\}\cap\mathcal{E}}\\
&=\Pra{ \rg_1\in \mathcal{A}}+\Pra{ \mathcal{E}}-\Pra{ \{\rg_1\in \mathcal{A}\}\cup\mathcal{E}}\\
&\ge \Pra{ \rg_1\in \mathcal{A}}+\Pra{\mathcal{E}}-1\\
&= \Pra{ \rg_1\in \mathcal{A}}+o(1).\\
\end{align*}
The result follows by taking $n\to \infty$
\end{proof}

\begin{proof}[Proof of Theorem~\ref{LematCoupling}]
We will show only \eqref{RownanieCoupling2} in the case $S_3\gg \sqrt{S_1}$. The remaining cases follow by similar arguments. Recall that $S_2=S_1-S_3$ and $S_2=\Theta(S_1)$.

Let $w_i\in\W$. Denote by $V_i$ the set of vertices which have chosen feature $w_i$ (i.e.~$V_i=\{v\in\V: w_i\in W(v)\}$). Let
\begin{equation}\label{RownanieVprim}
V'_i=
\begin{cases}
V_i&\text{ for }|V_i|\ge 2;\\
\emptyset&\text{ otherwise}.
\end{cases}
\end{equation}
For each $i$, let  $\mathcal{G}[V'_i]$ be a graph with the vertex set~$\V$ and an edge set containing those edges from $\Gnmp$ which have both ends in $V'_i$ (i.e. its edges form a clique with the vertex set $V'_i$). In the proof we will use the fact that $\bigcup_{i=1}^{m}\mathcal{G}[V'_i]=\Gnmp$. First, for each $i$ we will construct a coupling of $\mathcal{G}[V'_i]$ with an auxiliary random graph and then we will couple a sum of the auxiliary random graphs with $\Gsuma$.

For each $i$, $1\le i\le m$, let
\begin{align}
X_i&=|V_i|;\nonumber\\
Y_i&=|V'_i|;\label{RownanieYi} \\
\nonumber Z_i&=\mathbb{I}_{\{Y_i \text{ is odd}\}},
\end{align}
where $\mathbb{I}_A$ is an indicator random variable of the event $A$. Note that $X_i$, $1\le i\le m$, are independent  random variables with the binomial distributions $\Bin{n}{p_i}$, $1\le i\le m$. 
For each $i$, $1\le i\le m$, we construct independently a coupling of $\G{2}{\frac{Y_i-3Z_i}{2}}\cup \G{3}{Z_i}$ and $\mathcal{G}[V'_i]$. Given $Y_i=y_i$ and $Z_i=z_i$, for each $i$ independently, we generate  instances of $\G{2}{\frac{y_i-3z_i}{2}}$ and  $\G{3}{z_i}$. Let $Y_i'=y_i'$ be the number of non-isolated vertices in the constructed instance of $\G{2}{\frac{y_i-3z_i}{2}}\cup \G{3}{z_i}$. By definition $y_i'\le y_i$. Set now $V'_i$ to be the union of the set of non--isolated vertices of the constructed instance of $\G{2}{\frac{y_i-3z_i}{2}}\cup \G{3}{z_i}$ and $y_i-y_i'$ vertices chosen uniformly at random from the remaining vertices. This coupling implies
$$
\G{2}{\frac{Y_i-3Z_i}{2}}\cup \G{3}{Z_i}\coupling\mathcal{G}[V_i].
$$

 Now 
let 
$$
M_2=\sum_{1\le i\le m} \frac{Y_i-3Z_i}{2}\quad\text{and}\quad M_3=\sum_{1\le i\le m} Z_i.
$$
Graphs $\G{2}{\frac{Y_i-3Z_i}{2}}\cup \G{3}{Z_i}$, $1\le i \le m$, are independent and $\mathcal{G}[V_i]$, $1\le i \le m$, are independent. Therefore by Fact~\ref{FaktCouplingIndependent}, the definition of $\Gnmp$, and the definitions of $\G{2}{\cdot}$ and $\G{3}{\cdot}$ we have
\begin{equation}\label{RownanieGgwiazdkaGw}
\begin{split}
\G{2}{M_2}
\cup 
\G{3}{M_3}
&=
\bigcup_{1\le i \le m}\left(\G{2}{\textstyle{\frac{Y_i-3Z_i}{2}}}\cup \G{3}{Z_i}\right)\\
&\coupling
\bigcup_{1\le i \le m}\mathcal{G}[V_i]\\
&=\Gnmp.
\end{split}
\end{equation}
Note that by definition
\begin{align*}
\E \sum_{i=1}^{m}Y_i &= \E \sum_{i=1}^{m}(X_i-\mathbb{I}_{X_i=1})=S_1
\quad
\text{and}\\
\E \sum_{i=1}^{m}Z_i &= \E \sum_{i=1}^{m}(\mathbb{I}_{X_i\,is\,odd}-\mathbb{I}_{X_i=1})=S_3,
\end{align*}
where $S_1$ and $S_3$ are defined by \eqref{RownanieS}. 
Therefore
$$
\E{ M}_2=\frac{S_1-3S_3}{2}\quad\text{and}\quad\E{ M}_3 = S_3.
$$
We will prove that $M_2$ and $M_3$ {\whp} are concentrated around their expected values.  
Since for each $i$, $1\le i\le m$, $X_i$ is a random variable with the binomial distribution $\Bin{n}{p_i}$ and $\E (X_i\mathbb{I}_{X_i=1})=\E \mathbb{I}_{X_i=1}$ we have
\begin{align*}
\Var \sum_{i=1}^{m}Y_i
&=\sum_{i=1}^{m}\Var \left(X_i-\mathbb{I}_{X_i=1}\right)
\\ 
&=
\sum_{i=1}^{m}
\left(
\Var X_i + \Var \mathbb{I}_{X_i=1} - 2(\E (X_i\mathbb{I}_{X_i=1})-\E X_i\E \mathbb{I}_{X_i=1})  
\right)
\\
&\le
\sum_{i=1}^{m}
\left(
\E X_i - \E \mathbb{I}_{X_i=1} + 2\E X_i\E \mathbb{I}_{X_i=1}
\right)\\
&=
\sum_{i=1}^{m}
\left(
\E Y_i + 2(np_i)^2(1-p_i)^{n-1}
\right)\\
&\le
\sum_{i=1}^{m}
\left(
\E Y_i + 3(np_i-np_i(1-p_i)^{n-1})
\right)\\
&= 4S_1.
\end{align*}
In the second last line we use the fact that function $(2nx+3)(1-x)^{n-1}$ is decreasing for $x\in [0,1)$. Similarly 
\begin{align*}
\Var \sum_{i=1}^{m}Z_i 
&=\sum_{i=1}^{m} \Var\left(\mathbb{I}_{X_iodd}-\mathbb{I}_{X_i=1}\right)
\\
&=\sum_{i=1}^{m}
\left(
\Var \mathbb{I}_{X_i\,odd} + \Var \mathbb{I}_{X_i=1} - 2(\E (\mathbb{I}_{X_i\,odd}\mathbb{I}_{X_i=1})-\E \mathbb{I}_{X_i\,odd}\E \mathbb{I}_{X_i=1})  
\right)
\\
&\le
\sum_{i=1}^{m}
\left(
\E \mathbb{I}_{X_i\,odd} - \E \mathbb{I}_{X_i=1} + 2\E \mathbb{I}_{X_i\,odd}\E \mathbb{I}_{X_i=1}
\right)\\
&\le
\sum_{i=1}^{m}
\left(
\E X_i - \E \mathbb{I}_{X_i=1} + 2\E X_i\E \mathbb{I}_{X_i=1}
\right)\\
&\le 4S_1.
\end{align*}
Therefore by Chebyshev's inequality, for any function $\omega'$ tending to infinity {\whp}
\begin{align}
\label{RownanieYikoncentracja}
\Pra{
\left|\sum_{i=1}^{m}Y_i - S_1\right|\ge \omega'\sqrt{S_1}
}\le \frac{4}{(\omega')^2}=o(1),
\\
\nonumber
\Pra{
\left|\sum_{i=1}^{m}Z_i - S_3\right|\ge \omega'\sqrt{S_1}
}\le \frac{4}{(\omega')^2}=o(1).
\end{align}
Thus with {\whp} for $\omega'\ll S_3/\sqrt{S_1}$ (the assumption is to ensure that for large $n$ r.h.s. is positive)
\begin{align*}
M_2&\ge \frac{S_1-3S_3-4\omega'\sqrt{S_1}}{2},\\ 
M_3&\ge S_3-\omega'\sqrt{S_1}.
\end{align*}
Therefore by Fact~\ref{FaktCouplingGgwiazdka} and \eqref{RownanieGgwiazdkaGw}
\begin{multline*}\label{Rownanienpduze1}
\G{2}{\frac{S_1-3S_3-4\omega'\sqrt{S_1}}{2}}\cup\G{3}{S_3-\omega'\sqrt{S_1}}\\
\coup \G{2}{M_2}\cup\G{3}{M_3}\coupling \Gnmp. 
\end{multline*}
We may assume that in the above coupling $\G{2}{\frac{S_1-3S_3-4\omega'\sqrt{S_1}}{2}}$ and $\G{3}{S_3-\omega'\sqrt{S_1}}$ are independent. The main reason for this is the fact that even though $M_2$ and $M_3$ are dependent (i.e. also $\G{2}{M_2}$ and $\G{3}{M_3}$ are dependent), the choices of hyperedges of $\mathbb{H}_{*i}(n,\cdot)$ in distinct draws are independent. 
Moreover note that in the coupling, in order to get $\G{2}{M_2}\cup\G{3}{M_3}$  from a sum of independent graphs $\G{2}{\frac{S_1-3S_3-4\omega'\sqrt{S_1}}{2}}\cup\G{3}{S_3-\omega'\sqrt{S_1}}$ we may proceed in the following way. Given independent instances of $\mathbb{H}_{*2}(n,{\frac{S_1-3S_3-4\omega'\sqrt{S_1}}{2}})$ and $\mathbb{H}_{*3}(n,{S_3-\omega'\sqrt{S_1}})$, $M_2=m_2$ and $M_3=m_3$:

\medskip
-- if $m_2$ ( or $m_3$ resp. )  is larger than $(S_1-3S_3-4\omega'\sqrt{S_1})/2$ ( or $S_3-\omega'\sqrt{S_1}$ resp. ) then we make $m_2-(S_1-3S_3-4\omega'\sqrt{S_1})/2$ (or $m_3-(S_3-\omega'\sqrt{S_1})$) additional draws and add hyperedges to  $\mathbb{H}_{*2}(n,\frac{S_1-3S_3-4\omega'\sqrt{S_1}}{2})$ 
( $\mathbb{H}_{*3}(n,S_3-\omega'\sqrt{S_1})$ resp. )

-- if $m_2$ ($m_3$ resp.) is smaller than $(S_1-3S_3-4\omega'\sqrt{S_1})/2$ ( or $S_3-\omega'\sqrt{S_1}$ resp. ) then we delete from $\mathbb{H}_{*2}(n,\frac{S_1-3S_3-4\omega'\sqrt{S_1}}{2})$ 
( $\mathbb{H}_{*3}(n,S_3-\omega'\sqrt{S_1})$ resp.) hypredges  attributed to the last draws to get exactly $m_i$, $i=2,3$,  draws.\\

Let $M_2'$ and $M_3'$ be independent random variables with the Poisson distributions  
$$
\Po{\frac{S_1-3S_3-5\omega'\sqrt{S_1}}{2}}\quad\text{and}\quad\Po{S_3-2\omega'\sqrt{S_1}},\text{ respectively.}
$$
Then by a sharp concentration of the Poisson distribution 
\begin{align*}
\Pra{M_2'\le \frac{S_1-3S_3-4\omega'\sqrt{S_1}}{2}}&=1-o(1)
\quad
\text{and}
\\
\Pra{M_3'\le S_3-\omega'\sqrt{S_1}}&=1-o(1).
\end{align*}
Therefore by Fact~\ref{FaktCouplingGgwiazdka}
and \eqref{RownanieGstarPoisson}
\begin{align*}
&\Gn{1-e^{\left(-\frac{S_1-3S_3-5\omega'\sqrt{S_1}}{2\binom{n}{2}}\right)}}
\cup
\Gh{1-e^{\left(-\frac{S_3-2\omega'\sqrt{S_1}}{\binom{n}{3}}\right)}}\\
&=
\G{2}{M'_2}\cup\G{3}{M'_3}\\
&\coup 
\G{2}{\frac{S_1-3S_3-4\omega'\sqrt{S_1}}{2}}\cup\G{3}{S_3-\omega'\sqrt{S_1}}\\
&\coup 
\Gnmp .
\end{align*}
For $S_1=o(n^2)$ (then also $S_3\le S_1=o(n^3)$), $\omega=5\omega'$, and $\phat_2$ and $\phat_3$ defined by \eqref{RownanieHatp} we have  
\begin{align*}
\hat{p}_2&=\frac{S_1-3S_3-5\omega'\sqrt{S_1}-\frac{2S_1^2}{n^2}}{2\binom{n}{2}}
\le 
1-e^{\left(-\frac{S_1-3S_3-5\omega'\sqrt{S_1}}{2\binom{n}{2}}\right)};\\
\hat{p}_3&\le\frac{S_3-2\omega'\sqrt{S_1}-\frac{6S_3^2}{n^3}}{\binom{n}{3}}
\le 
1-e^{\left(-\frac{S_3-2\omega'\sqrt{S_1}}{\binom{n}{3}}\right)}.
\end{align*}
Therefore using
standard couplings of $\Gn{\cdot}$ and $H_3(n,\cdot)$ finally we get
\begin{align*}
&\Gn{\hat{p}_2}\cup\Gh{\hat{p}_3}
\\
&\coupling
\Gn{1-e^{\left(-\frac{S_1-3S_3-5\omega'\sqrt{S_1}}{2\binom{n}{2}}\right)}}\cup
\Gh{1-e^{\left(-\frac{S_3-2\omega'\sqrt{S_1}}{\binom{n}{3}}\right)}}\\
&=\G{2}{\Po{\frac{S_1-3S_3-5\omega'\sqrt{S_1}}{2}}}\cup\G{3}{\Po{S_3-2\omega'\sqrt{S_1}}}
\\
&\coup 
\Gnmp.
\end{align*}
The result follows by Facts~\ref{FaktPrzechodniosc} and~\ref{FaktCouplingPrawdopodobienstwa}.
\end{proof}

\section{Vertex degrees in $\Gnmp$}\label{SectionStopnie}
For any graph $G$ denote by $\delta(G)$ the minimum vertex degree in $G$. 
\begin{lem}\label{LematStopnie}
Let $c_n$ be a sequence of real numbers, $\bar{p}=(p_1,\ldots,p_m)$ be such that $\max_{1\le i\le m}p_i=o((\ln n)^{-1})$, and $S_1$ be given by \eqref{RownanieS}.\\
  If
$$
S_1=n(\ln n+ c_n)
$$
then
$$\lim_{n\to \infty}\Pra{\delta(\Gnmp)\ge 1}=f(c_n),$$
where $f(\cdot)$ is given by 
\eqref{RownanieFcn}.
\end{lem}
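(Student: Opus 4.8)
The plan is to reduce the statement to the distribution of the number of isolated vertices and then apply the method of factorial moments. Write $N$ for the number of isolated vertices of $\Gnmp$, so that $\{\delta(\Gnmp)\ge 1\}=\{N=0\}$. The key starting observation is that a vertex $v$ is isolated precisely when, for every feature it possesses, it is the sole owner of that feature. Since feature memberships are independent across features and across vertices, this factorises over features, giving
$$
\Pra{v\text{ is isolated}}=\prod_{i=1}^{m}\left((1-p_i)+p_i(1-p_i)^{n-1}\right)=\prod_{i=1}^{m}(1-q_i),
$$
where $q_i:=p_i(1-(1-p_i)^{n-1})$ and, by the definition of $S_1$ in \eqref{RownanieS}, $\sum_{i=1}^m q_i=S_1/n=\ln n+c_n$.

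First I would compute the first moment. Using $\ln(1-q_i)=-q_i+O(q_i^2)$ together with the crucial estimate $\sum_{i=1}^m p_i^2=o(1)$ (hence also $\sum_i q_i^2=o(1)$), I obtain $\Pra{v\text{ is isolated}}=\exp(-(\ln n+c_n)+o(1))=e^{-c_n}(1+o(1))/n$, and therefore $\E N=e^{-c_n}(1+o(1))$. This already settles the regime $c_n\to\infty$: then $\E N\to 0$, and Markov's inequality gives $\Pra{\delta(\Gnmp)\ge 1}=\Pra{N=0}\to 1=f(c_n)$. For the other two regimes I would compute the descending factorial moments. For distinct $v_1,\dots,v_r$, joint isolation forces that on each feature at most one of them is present and, when present, is the sole owner; factorising over features gives
$$
\E[(N)_r]=(n)_r\prod_{i=1}^m\left((1-p_i)^r+r\,p_i(1-p_i)^{n-1}\right).
$$
The decisive comparison is that each per-feature factor equals the $r$-th power of the single-vertex factor $(1-q_i)$ up to a multiplicative $1+O(r^2p_i^2)$ error; expanding $((1-p_i)+p_i(1-p_i)^{n-1})^r$ and using $p_i(1-p_i)^{n-1}\le p_i$ and $1-(1-p_i)^{r-1}\le (r-1)p_i$ makes the discrepancy $O(r^2p_i^2)$ for fixed $r$. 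Multiplying over $i$, using $\sum_i p_i^2=o(1)$ and $(n)_r/n^r\to 1$, yields $\E[(N)_r]=(\E N)^r(1+o(1))$ for each fixed $r$.

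From here the remaining two cases follow. If $c_n\to c$ then $\E N\to e^{-c}$, so $\E[(N)_r]\to (e^{-c})^r$ for all $r$; by the method of moments $N$ converges in distribution to $\Po{e^{-c}}$, whence $\Pra{N=0}\to e^{-e^{-c}}=f(c_n)$. If $c_n\to-\infty$ then $\E N\to\infty$, and the case $r=2$ gives $\E[(N)_2]=(\E N)^2(1+o(1))$, so $\Var N=o((\E N)^2)$ and the second-moment method yields $\Pra{N=0}\to 0=f(c_n)$.

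The main obstacle, and the only place where the hypothesis $\max_i p_i=o((\ln n)^{-1})$ is genuinely needed, is controlling the dependence between the isolation events of different vertices, which the computation above reduces to establishing $\sum_{i=1}^m p_i^2=o(1)$. I would prove this by splitting the features according to whether $np_i\le 1$ or $np_i>1$. In the first range $1-(1-p_i)^{n-1}\ge \tfrac12(n-1)p_i$, so $p_i^2\le 2q_i/(n-1)$ and the contribution is $O((\ln n)/n)=o(1)$; in the second range $1-(1-p_i)^{n-1}$ is bounded below by a constant, so $p_i\le 2q_i$ and the contribution is at most $(\max_i p_i)\cdot 2\sum_i q_i=o((\ln n)^{-1})\cdot O(\ln n)=o(1)$. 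Without the bound on $\max_i p_i$ a single heavy feature could force $\sum_i p_i^2$ to be of order one, destroying both the first-moment asymptotics and the Poisson approximation; this is exactly the awkward behaviour the assumption excludes.
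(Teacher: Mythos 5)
Your proposal is correct, but it takes a genuinely different route from the paper. The paper deliberately avoids computing moments of the isolated-vertex count: it couples the construction of $\Gnmp$ with a coupon collector process on $\V$ (one phase per feature, drawing until $Y_i$ distinct vertices are collected), shows that the total number of draws is $n(\ln n+c_n+o(1))$ with high probability via the estimates \eqref{RownanieYikoncentracja} and \eqref{RownanieTY}, and then reads off the answer from the classical coupon collector limit law, since the collected coupons are exactly the non-isolated vertices. You instead observe that joint isolation of $v_1,\dots,v_r$ factorises exactly over features, giving the closed form $\E[(N)_r]=(n)_r\prod_i\bigl((1-p_i)^r+rp_i(1-p_i)^{n-1}\bigr)$, and you reduce everything to $\sum_i p_i^2=o(1)$, which is precisely where the hypothesis $\max_i p_i=o((\ln n)^{-1})$ enters; this is a clean and fully rigorous alternative (the paper's remark that moment estimation is ``complicated'' really pertains to degrees $\ge k$, where a vertex's degree is $|\bigcup_{i:v\in V_i}V_i\setminus\{v\}|$ and does not factorise — that is why the paper builds the coupon-collector machinery, which it then reuses in Lemmas~\ref{LematStopnie2} and~\ref{LematStopniek}, whereas your computation would not extend there). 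One small point to make explicit: your estimate $\sum_i p_i^2=o(1)$ uses $\sum_i q_i=O(\ln n)$, so you should state the standard reduction to $c_n=O(\ln n)$ by monotonicity (as the paper does), noting that when $c_n\to+\infty$ faster than that the crude bound $\E N\le n\exp(-S_1/n)=e^{-c_n}$ already closes the case via Markov.
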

\noindent Note that the condition $\max_{1\le i\le m}p_i=o((\ln n)^{-1})$ is necessary. Otherwise the number of  vertices of a given degree depends more on the fluctuations of the values of the vector $\overline{p}$. For example let $m=n^{2}$ and $\overline{p}$ equals
\begin{align*}
\Big(\underbrace{\frac{b_n}{\ln n},\ldots,\frac{b_n}{\ln n}}_{\ln n(\ln n+c_n)/b_n}, \frac{1}{nm},\ldots,\frac{1}{nm}\Big)
\quad\text{ or }\\
\left(\frac{1}{\sqrt{\ln n}},\sqrt{\frac{\ln n+c_n}{nm}},\ldots,\sqrt{\frac{\ln n+c_n}{nm}}\right),
\end{align*} 
for some $b_n,c_n=o(\ln n)$.
In both cases 
$$
\sum_{i=1}^{m}np_i\left(1-(1-p_i)^{n-1}\right)=n(\ln n+c_n+o(1))
$$ 
but the expected number of vertices of degree $0$ in $\Gnmp$ is
$$
(1+o(1))\exp\left(-c_n-\frac{b_n}{2}\right)
\quad\text{ and }\quad
(1+o(1))\exp(-c_n), 
$$
respectively.
\begin{lem}\label{LematStopnie2}
Let $c_n$ be a sequence of real numbers, $k$ be a positive integer, $\bar{p}=(p_1,\ldots,p_m)$ be such that $\max_{1\le i\le m}p_i=o(\ln n^{-1})$,  
and $S_1$ and $S_{1,t}$, $t=2\ldots,k$, be given by \eqref{RownanieS}.
\begin{itemize}
\item[(i)] If 
$$
S_1=n\left(\ln n+(k-1)\ln \left(\max\left\{1, \left(\frac{S_{1,2}}{S_1}\ln n\right)\right\}\right) + c_n\right)
$$  then
$$
\lim_{n\to \infty}\Pra{\delta(\Gnmp)\ge k}=0
\quad
\text{for }c_n\to - \infty.
$$

\item[(ii)] If
$$
S_1-\sum_{t=3}^{k}S_{1,t}=n\left(\ln n+(k-1)\ln \left(\max\left\{1, \left(\frac{S_{1,2}}{S_1}\ln n\right)\right\}\right) + c_n\right)
$$
 then
$$
\lim_{n\to \infty}\Pra{\delta(\Gnmp)\ge k}=1
\quad
\text{for }c_n\to \infty.
$$
\end{itemize}  
\end{lem}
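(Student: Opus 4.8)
The plan is to count the low--degree vertices by the method of moments. Let $X$ be the number of vertices $v$ with $\deg_{\Gnmp}(v)\le k-1$, so that $\{\delta(\Gnmp)\ge k\}=\{X=0\}$. For (ii) I would show $\E X\to 0$ and conclude by $\Pra{X\ge 1}\le \E X$; for (i), whose hypothesis forces $\E X\to\infty$, I would run the second moment method, proving $\E X^2\le(1+o(1))(\E X)^2$ so that $\Pra{X\ge 1}\to 1$, i.e.\ $\Pra{\delta(\Gnmp)\ge k}\to 0$. The basic tool is an exact conditional law: fixing $v$ and conditioning on its feature set $W(v)$, the events ``$u$ shares a feature with $v$'' are independent over $u\neq v$, whence
\[
\deg_{\Gnmp}(v)\sim\Bin{n-1}{1-q},\qquad q=\prod_{i:\,w_i\in W(v)}(1-p_i).
\]
Because $\max_i p_i=o((\ln n)^{-1})$, on the low--degree event $q$ is close to $1$ and $\deg(v)$ is essentially Poisson with mean equal to the number of other vertices sharing a feature with $v$.

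For the first moment I would split the shared features through $v$ by size. Let $F_t$ count the features chosen by $v$ and by exactly $t-1$ other vertices; then $F_t$ is approximately $\Po{\lambda_t}$ with $\lambda_t=S_{1,t}/n$, the $F_t$ are asymptotically independent, $\sum_{t\ge 2}\lambda_t=S_1/n$, and, up to negligible overlaps among the neighbours of a low--degree vertex, $\deg(v)=\sum_{t\ge 2}(t-1)F_t$. Thus $\Pra{\deg(v)\le k-1}$ equals, to within a factor $1+o(1)$, $e^{-S_1/n}$ times the maximal product $\prod_t \lambda_t^{f_t}/f_t!$ over profiles with $\sum_t(t-1)f_t\le k-1$. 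Since $\lambda_2=(1+o(1))a_n\ln n$, the profile $f_2=k-1$ dominates and gives $\E X\approx\frac{n}{(k-1)!}e^{-S_1/n}(S_{1,2}/n)^{k-1}$; plugging in $S_1=n(\ln n+(k-1)\ln(\max\{1,a_n\ln n\})-c_n)$ yields $\ln \E X\sim c_n\to\infty$. When $a_n\ln n\le 1$ the maximising profile is the empty one, i.e.\ the $F_t\equiv 0$ (isolated vertex) term dominates; this is the source of the truncation $\max\{1,\cdot\}$ in the statement.

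Part (ii) requires bounding \emph{every} profile, not just the dominant one. A vertex of degree $<k$ lies in no feature of size $>k$, but may lie in features of sizes $3,\dots,k$, and these mixed profiles must each contribute $o(1/n)$ to $\Pra{\deg(v)\le k-1}$. Taking logarithms of a profile's contribution and using $\ln\lambda_t\le\lambda_t-1$, one checks that replacing $S_1$ by $S_1-\sum_{t=3}^{k}S_{1,t}=S_1-n\sum_{t=3}^{k}\lambda_t$ in the hypothesis supplies exactly the negative terms needed to cancel the factors $\prod_{t\ge 3}\lambda_t^{f_t}$ coming from features of sizes $3,\dots,k$, forcing every profile to $0$ and hence $\E X\to 0$. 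This is why (i) and (ii) are stated with different normalisations: (i) needs only one term to blow up, whereas (ii) needs the simultaneous decay of all mixed terms.

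Finally, the second moment for (i) uses $\E X^2=\E X+n(n-1)\Pra{\deg(v_1)\le k-1,\ \deg(v_2)\le k-1}$; conditioning on $W(v_1)$ and $W(v_2)$ turns the two degrees into independent binomials on the remaining $n-2$ vertices once the rare event $W(v_1)\cap W(v_2)\neq\emptyset$ is discarded, so the joint probability factorises and $\E X^2\le(1+o(1))(\E X)^2$. The main obstacle throughout is the rigorous, $\overline{p}$--uniform control of the inhomogeneous feature--size statistics — the joint Poisson approximation of the $F_t$ and the suppression of neighbour overlaps — and, above all, the estimation of the full sum over mixed profiles in (ii); this is exactly the point at which the correction $\sum_{t=3}^{k}S_{1,t}$ and the assumption $\max_i p_i=o((\ln n)^{-1})$, which prevents one popular feature from dominating a vertex's degree, become indispensable.
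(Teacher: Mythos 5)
Your route — a direct first/second moment computation on the number $X$ of vertices of degree $\le k-1$ in $\Gnmp$ itself — is genuinely different from the paper's, and it is precisely the route the paper deliberately avoids (``due to edge dependencies, estimation of moments of the random variable counting vertices with a given degree in $\Gnmp$ is complicated''). The paper instead splits $\Gnmp$ by feature size into the graphs $\Gnm{t}{\overline{p}}$, replaces the union of the large-feature parts by an \emph{independent} coupon collector process (Facts~\ref{FaktCollectorPlus}, \ref{FaktCollectorMinus}) and replaces $\Gnm{2}{\overline{p}}$ by an \emph{independent} $\Gn{\phat_{2\pm}}$ (Fact~\ref{FaktStopnieCoupling}); the relevant count $X_\pm$ then lives in a product space, where both moments are elementary. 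Note in particular that in part (ii) the term $\sum_{t=3}^{k}S_{1,t}$ does not arise from a cancellation against ``mixed profiles'': the paper simply \emph{discards} all features of sizes $3,\dots,k$, using that membership in any feature of size $\ge k+1$ already forces degree $\ge k$, so that $\{\delta(\Gnmp)\ge k\}$ is implied by ``every vertex is non-isolated in $\Gnm{k+1}{\overline{p}}$ or has degree $\ge k$ in $\Gnm{2}{\overline{p}}$''. Your heuristic identification of the leading profile $f_2=k-1$, of the truncation $\max\{1,\cdot\}$, and of the asymptotics of $\E X$ is correct and consistent with the paper's computations.

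However, as a proof your proposal has concrete gaps exactly at the points you defer. (a) For (ii) you need an \emph{upper} bound on $\Pra{\deg(v)\le k-1}$ of order $o(1/n)$, and here the inequality goes the wrong way: overlaps among the neighbour sets of different features of $v$ only \emph{decrease} the degree, so $\deg(v)\le k-1$ does not imply $\sum_t(t-1)F_t\le k-1$. You must enumerate and bound all collapsed configurations (several features of $v$ hitting the same few vertices), uniformly over inhomogeneous $\overline{p}$, and show their total contribution is $o(1/n)$; ``up to negligible overlaps'' is the statement to be proved, not an observation. (b) The joint Poisson approximation of $(F_2,\dots,F_k)$ must be accurate to relative error $1+o(1)$ on events of probability $\Theta(1/n)$, which is much stronger than asymptotic independence in total variation. (c) In the second moment for (i), you cannot simply ``discard'' the event $W(v_1)\cap W(v_2)\neq\emptyset$: summed over pairs its bare probability can be of order $n\ln n$, which is not $o\bigl((\E X)^2\bigr)$ when $c_n\to\infty$ slowly, so you must bound the probability of \emph{both vertices having low degree and sharing a feature} and show it is $o\bigl((\E X)^2/n^2\bigr)$. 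Each of these is repairable, but together they constitute the actual technical content of the lemma; the paper's coupon-collector decomposition is engineered specifically so that none of them has to be confronted.
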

\noindent Here and in the proof we assume that $\sum_{t=3}^{2}S_{1,t}=0$. 

If we assume that $p_i=p$, for all $1\le i\le m$, then with a little more work the result of Lemma~\ref{LematStopnie2} may be improved.
\begin{lem}\label{LematStopniek}
Let $p\in (0;1)$, $k$ be a positive integer, $m=m(n)$ be such that  $m\gg \ln^2 n$,
and
$$
a_n=(np)^{k-1} \left( \left(\frac{e^{-np}\ln n}{1-e^{-np}}\right)^{k-1}+\frac{e^{-np}\ln n}{1-e^{-np}}\right).
$$
If  
\begin{equation*}
p(1-(1-p)^{n-1})
=
\frac{\ln n + \ln\left(\max\left\{1,a_n\right\}\right) + c_n}{m},
\end{equation*}
  then
$$
\lim_{n\to \infty}\Pra{\delta(\Gnmpp)\ge k}=
\begin{cases}
0&\text{ for }c_n\to -\infty;\\
1&\text{ for }c_n\to \infty.
\end{cases}
$$
\end{lem}

\begin{proof}[Proof of Lemma \ref{LematStopnie}]

In the proof we assume that $c_n=o(\ln n)$. Note that if $\overline{p}'=(p'_1,\ldots,p'_m)$ and $\overline{p}=(p_1,\ldots,p_m)$ are such that $p'_i\le p_i$, for all $1\le i\le m$, then $\Gnm{}{\overline{p}'}\coupling \Gnmp$ and $\sum_{i=1}^m np'_i(1-(1-p'_i)^{n-1})\le \sum_{i=1}^m np_i(1-(1-p_i)^{n-1})$. Therefore by monotonicity of the considered property, analysis of the case $c_n=o(\ln n)$ is enough to prove the general result stated above. 

Consider a coupon collector process in which in each draw one choose one coupon uniformly at random from $\V$. In order to determine the minimum degree
we establish a coupling of the coupon collector process on $\V$ and the construction of $\Gnmp$.
Define $V_i'$ and $Y_i$ as in \eqref{RownanieVprim} and \eqref{RownanieYi}. 
We consider a process in which we collect coupons from $\V$ and at the same time we construct a family of sets $\{V'_i:i=1,\ldots,m\}$ (i.e. equivalently we construct an instance of $\Gnmp$). Assume that we have a given vector
$$
(Y_1,Y_2,\ldots,Y_m)=(y_1,y_2,\ldots,y_m)
$$
chosen so that $Y_i$ are independent random variables with distribution of the random variables defined in \eqref{RownanieYi}. We divide the process of collecting coupons into $m$ phases. In the $i$--th phase, $1\le i\le m$, we draw independently one by one vertices uniformly at random from $\V$ until within the phase we get $y_i$ distinct vertices. Let $V'_i$ be the set of vertices chosen in the $i$--th phase. We construct an instance of $\Gnmp$ by connecting by edges all pairs of vertices within $V_i'$, for all $1\le i\le m$. After the $m$--th phase   $\bigcup_{1\le i\le m}V'_i$ is the set of non-isolated vertices in $\Gnmp$. 
Denote by $T_i$ the number of draws in the $i$--th phase. If $Y_i=y_i=0$ then $T_i=0$. If $Y_i=y_i\ge 2$ and in the $i$--th phase we have already collected $j<y_i$ vertices then the number of draws to collect the $j+1$--st vertex has the geometric distribution with parameter $\frac{n-j}{n}$. Therefore 
\begin{align*}
\E (T_i-Y_i|Y_i=y_i)
&=\left(\sum_{j=0}^{y_i-1}\frac{n}{n-j}\right)-y_i\\
&=\sum_{j=0}^{y_i-1}\frac{j}{n-j}\\
&\le 
\begin{cases}
\sum_{j=0}^{y_i-1}\frac{2j}{n}=\frac{y_i(y_i-1)}{n}&\text{for }2\le y_i<n/2;\\
n\ln n&\text{for }y_i\ge n/2.
\end{cases}
\end{align*}
Note that
$$\Pra{Y_i\ge \frac{n}{2}}\le
\binom{n}{\frac{n}{2}}p_i^{\frac{n}{2}}\le \left(\frac{enp_i}{\frac{n}{2}}\right)^{\frac{n}{2}}\le \frac{p_i^2}{\ln n}.$$
Thus
\begin{align*}
\E (T_i-Y_i)
&\le \sum_{y_i=2}^{n/2}\frac{y_i(y_i-1)}{n}\binom{n}{y_i}p_i^{y_i}(1-p_i)^{n-y_i}+n\ln n\Pra{Y_i\ge \frac{n}{2}}\\
&\le 2np_i^2.
\end{align*}
Therefore
\begin{align*}
\E \left(\sum_{i=1}^{m}(T_i-Y_i)\right)
&\le \sum_{i=1}^{m}2np_i^2\\
&=\sum_{i=1}^{m}2\max\left\{p_i,\frac{1}{n}\right\}\min\{np_i,n^2p_i^2\}\\
&\le 4\max\left\{p_1,\ldots,p_m,\frac{1}{n}\right\}
\sum_{i=1}^{m}\frac{1}{2}\min\{np_i,n^2p_i^2\}\\
&=o\left(\frac{1}{\ln n}\right) \sum_{i=1}^{m}np_i\left(1-(1-p_i)^{n-1}\right)\\
&=o\left(\frac{S_1}{\ln n}\right),
\end{align*}
where $S_1$ is defined by \eqref{RownanieS}. In order to get the last but one line consider two cases $p(n-1)\le 1/2$ and $p(n-1)> 1/2$.
 
Moreover by Markov's inequality for any $\omega$
$$
\Pra{
\sum_{i=1}^{m}(T_i-Y_i)\ge \frac{S_1}{\omega \ln n} 
}\le \E \left(\sum_{i=1}^{m}(T_i-Y_i)\right)\frac{\omega \ln n}{S_1}.
$$
Therefore {\whp} 
\begin{equation}\label{RownanieTY}
\sum_{i=1}^{m}(T_i-Y_i)\le \frac{S_1}{\omega \ln n}\quad
\text{, for any }1\ll \omega\ll \frac{S_1}{(\ln n\ \E (\sum_{i=1}^{m}(T_i-Y_i)))}.
\end{equation}
Given $1\ll\omega\ll\min \{S_1/(\ln n\ \E (\sum_{i=1}^{m}(T_i-Y_i)));\sqrt{S_1}/\ln n\}$ let
$$
T_-=S_1-\omega\sqrt{S_1}\quad\text{and}\quad T_+=S_1+\omega\sqrt{S_1}+\frac{S_1}{\omega \ln n}.$$
In the probability space of the coupling described above define events:
\medskip

\begin{tabular}{lcp{15cm}}
$\mathcal{A_-}$&--&all coupons are collected in at most $T_-$ draws;\\
$\mathcal{A_+}$&--&all coupons are collected in at most $T_+$ draws;\\
$\mathcal{A}$&--&$\delta(\Gnmp)\ge 1$; \\
$\mathcal{B}$&--&the construction of $\Gnmp$ is finished between $T_-$-th \\ 
&&and $T_+$-th draw;\\
$\mathcal{B}_1$&--&the construction of $\Gnmp$ is finished in at most\\
&& $\sum_{i=1}^m Y_i+S_1/(\omega \ln n)$ draws;\\
$\mathcal{B}_2$&--&in $\Gnmp$ we have $S_1-\omega\sqrt{S_1}\le \sum_{i=1}^m Y_i\le S_1+\omega\sqrt{S_1}$.\\
\end{tabular}
\medskip

\noindent
By definition
$$
\mathcal{A_+}\cap\mathcal{B}
\subseteq
\mathcal{A}\cap\mathcal{B}
\subseteq
\mathcal{A_-}\cap\mathcal{B}.
$$
Therefore
\begin{equation}\label{RownanieAplusminus}
\Pra{\mathcal{A_+}}-\Pra{\mathcal{B}^c}
\le
\Pra{\mathcal{A}}
\le \Pra{\mathcal{A_-}}+\Pra{\mathcal{B}^c},
\end{equation}
where for any event $\mathcal{C}$ we denote by $\mathcal{C}^c$ its complement. 

Let $S_1=n(\ln n + c_n)$, then
$$T_\pm=n(\ln n+c_n+o(1)).$$
Therefore by the classical results on the coupon collector problem
$$\Pra{\mathcal{A}_-}\sim\Pra{\mathcal{A}_+}\sim f(c_n)$$
and by \eqref{RownanieYikoncentracja} and \eqref{RownanieTY}
$$
\Pra{\mathcal{B}^c}\le\Pra{\mathcal{B}_1^c\cup \mathcal{B}_2^c}=o(1).
$$
Thus the lemma follows by substituting  the above values to \eqref{RownanieAplusminus}. 
\end{proof}

The proofs of Lemmas~\ref{LematStopnie2} and~\ref{LematStopniek} rely on a coupling with the coupon collector process similar to this used in the proof of Lemma~\ref{LematStopnie} and a technique of dividing $\Gnmp$ into auxiliary subgraphs. We will give here an idea of the proofs of Lemmas~\ref{LematStopnie2} and~\ref{LematStopniek} and leave purely technical details to Appendix.  

First we gather some simple facts concerning the division of $\Gnmp$. Let $k\ge 2$ be an integer.
For each $2\le t\le k$, let  $\Gnm{t}{\overline{p}}$ be a random graph with a vertex set $\V$ and an edge set consisting of those edges from $\Gnmp$, which are contained in at least one of the sets $\{V_i: |V_i|=t\}$, where $V_i$ is defined as in the proof of Theorem~\ref{LematCoupling}. Moreover let $\Gnm{k+1}{\overline{p}}$ be a subgraph of $\Gnmp$ containing only those edges which are subsets of at least one of the sets $\{V_i: |V_i|\ge k+1\}$. Let
$$
Q_t =\sum_{i=1}^{m}\mathbb{I}_{Y_i=t}, \text{ for all }t=2,\ldots,k,
\quad\text{and}\quad   Q_{k+1} =\sum_{i=1}^{m}Y_i-\sum_{t=2}^{k}tQ_t.
$$ 
Note that, for all $t=2,\ldots,k$, we have $\Gnm{t}{\overline{p}}=\G{t}{Q_t}$, where $\G{t}{\cdot}$ is defined as in Section~\ref{SectionCoupling}. Moreover
\begin{equation*}
\E Q_t =\frac{S_{1,t}}{t}
 \text{ for all }t=2,\ldots,k,\quad \text{and }\quad
 \E Q_{k+1} =S_1-\sum_{t=2}^{k}S_{1,t},
 \end{equation*}
 where  $S_1$ and $S_{1,t}$ are defined by \eqref{RownanieS}. For all $1\le t\le k$, 
$Q_t$ is a sum of independent Bernoulli random variables and $S_{1,t}\le S_1$. Therefore by Chebyshev's inequality, Markov's inequality, and by  \eqref{RownanieYikoncentracja} for any $\omega\to\infty$ {\whp}
\begin{align}
\label{RownanieMtkoncentracja}
\max\left\{0,\frac{S_{1,t}-\omega\sqrt{S_1}}{t}\right\}&\le Q_t\le \frac{S_{1,t}+\omega\sqrt{S_1}}{t}, \text{ for all }t=2,3,\ldots,k 
\\
\label{RownanieM3koncentracja}
Q_{k+1}&\ge S_1-\sum_{t=2}^{k}S_{1,t} - k\omega\sqrt{S_1},\\
\label{RownanieMtbrakkoncentracja}
Q_{k+1}+\sum_{j=2}^{k}jQ_{j}-tQ_t
&\le
S_1-S_{1,t}+2\omega\sqrt{S_1},
\text{ for all }t=2,3,\ldots,k.
\intertext{ Moreover, for any $t=2,3,\ldots,k$, 
if $S_{1,t}\to \infty$ then {\whp}}
\frac{S_{1,t}- \omega\sqrt{S_{1,t}}}{t} 
&\le
Q_t
\le
\frac{S_{1,t}+ \omega\sqrt{S_{1,t}}}{t}. 
\label{RownanieM2koncentracja}
\end{align}
Let $\omega$ be a function tending slowly to infinity. In the proofs we will assume that $\omega$ is small enough to get the needed bounds.
 
For $t=2,\ldots,k$ let 
\begin{equation}\label{RownanieTp}
\begin{split}
T_{t+}&
=
S_1-S_{1,t}+2\omega\sqrt{S_1}+\frac{S_1}{\omega \ln n};
\\
T_-&
=
\max\left\{0,S_1-\sum_{t=2}^{k}S_{1,t} - k\omega\sqrt{S_1},\right\};
\\
\phat_{t-}&
=
\frac{S_{1,t}-2\omega\sqrt{S_{1,t}}-t!S^2_{1,t}n^{-t}}{t\binom{n}{t}};
\\
\phat_{t+}&
=
\frac{S_{1,t}+2\omega\sqrt{S_{1,t}}}{t\binom{n}{t}};
\\
\hat{q}_{t}&
=
\begin{cases}
0,&\text{for }
S_{1,t}=O(\omega\sqrt{S_{1}});\\
\frac{S_{1,t}-2\omega\sqrt{S_{1}}-t!S_{1,t}^2n^{-2}}{t\binom{n}{t}},&\text{otherwise}.
\end{cases}
\end{split}
\end{equation}
For all $t=2,\ldots,k$, 
$\Gnm{t}{\overline{p}}=\G{t}{M_2}$, thus by \eqref{RownanieMtkoncentracja} and \eqref{RownanieM2koncentracja}, using the same methods as in the proof of Theorem~\ref{LematCoupling} we can show the following fact.
\begin{fact}\label{FaktStopnieCoupling}
Let $\hat{q}_t$ and $\phat_{t\pm}$ be defined as in \eqref{RownanieTp}. Then
\begin{align*}
G_t(n,\hat{q}_{t})&\coup\Gnm{t}{\overline{p}},
\end{align*}
where $G_t(n,\hat{q_t})$ is independent from ($\Gnm{j}{\overline{p}})_{j=2,\ldots k+1,j\neq t}$.\\
Moreover if $S_{1,t}\to\infty$ then
\begin{align*}
G_t(n,\phat_{t-})
&\coup\Gnm{t}{\overline{p}}
\coup G_t(n,\phat_{t+}).  
\end{align*}
where $G_t(n,\phat_{t\pm})$ is independent from ($\Gnm{j}{\overline{p}})_{j=2,\ldots k+1,j\neq t}$.
\end{fact}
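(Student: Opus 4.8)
The plan is to imitate the proof of Theorem~\ref{LematCoupling} essentially verbatim, working one size class at a time and exploiting the identity $\Gnm{t}{\overline{p}}=\G{t}{M_t}$ noted just above, with $M_t=\sum_{i=1}^{m}\mathbb{I}_{Y_i=t}$ a sum of independent indicators satisfying $\E M_t=S_{1,t}/t$. Every assertion is obtained from a chain of couplings of the shape
$$
G_t(n,\cdot)\;\coupling\;\G{t}{\Po{\lambda}}\;\coup\;\G{t}{c}\;\coup\;\G{t}{M_t}=\Gnm{t}{\overline{p}},
$$
read in the relevant direction and assembled from three standard devices: the monotone coupling of independent--hyperedge graphs in the edge probability, the Poisson representation \eqref{RownanieGstarPoisson}, and the count comparisons of Fact~\ref{FaktCouplingGgwiazdka} glued together by transitivity (Fact~\ref{FaktPrzechodniosc}).

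For the lower coupling $G_t(n,\hat{q}_t)\coup\Gnm{t}{\overline{p}}$, if $S_{1,t}=O(\omega\sqrt{S_1})$ then $\hat{q}_t=0$, $G_t(n,0)$ is empty, and the claim is vacuous; so assume $S_{1,t}\gg\omega\sqrt{S_1}$. By \eqref{RownanieMtkoncentracja} we have $M_t\ge c:=\lfloor(S_{1,t}-\omega\sqrt{S_1})/t\rfloor$ {\whp}, so Fact~\ref{FaktCouplingGgwiazdka}(ii) gives $\G{t}{c}\coup\G{t}{M_t}$. I then take a Poisson variable of mean $\lambda=(S_{1,t}-2\omega\sqrt{S_1})/t$; since $c-\lambda\gtrsim\omega\sqrt{S_1}\gg\sqrt{\lambda}$, sharp concentration yields $\Pra{\Po{\lambda}\ge c}=o(1)$, whence $\G{t}{\Po{\lambda}}\coup\G{t}{c}$ by Fact~\ref{FaktCouplingGgwiazdka}(i). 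Finally \eqref{RownanieGstarPoisson} identifies $\G{t}{\Po{\lambda}}=G_t(n,1-\exp(-\lambda/\binom{n}{t}))$, and the elementary inequality $1-e^{-x}\ge x-x^2/2$ shows $\hat{q}_t\le1-\exp(-\lambda/\binom{n}{t})$: the quadratic correction $tS_{1,t}^2n^{-2}$ in the numerator of $\hat{q}_t$ is exactly what absorbs the $x^2/2$ term (using $\lambda\le S_{1,t}/t$ and $\binom{n}{t}\ge n^2/4$). A monotone coupling in the edge probability then gives $G_t(n,\hat{q}_t)\coupling G_t(n,1-\exp(-\lambda/\binom{n}{t}))$, and chaining the four steps through Fact~\ref{FaktPrzechodniosc} proves the claim.

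The ``moreover'' part, valid when $S_{1,t}\to\infty$, is identical but uses the sharper concentration \eqref{RownanieM2koncentracja} (with $\sqrt{S_{1,t}}$ in place of $\sqrt{S_1}$). For the lower bound $G_t(n,\phat_{t-})\coup\Gnm{t}{\overline{p}}$ I run the same chain with $c=\lfloor(S_{1,t}-\omega\sqrt{S_{1,t}})/t\rfloor$ and $\lambda=(S_{1,t}-2\omega\sqrt{S_{1,t}})/t$, the inequality $\phat_{t-}\le1-\exp(-\lambda/\binom{n}{t})$ now holding because the correction $t!\,S_{1,t}^2n^{-t}$ absorbs $x^2/2$ (using $\lambda\le S_{1,t}/t$ and $\binom{n}{t}\ge n^t/(2\,t!)$). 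For the upper bound $\Gnm{t}{\overline{p}}\coup G_t(n,\phat_{t+})$ the chain is reversed: $M_t\le c':=\lceil(S_{1,t}+\omega\sqrt{S_{1,t}})/t\rceil$ {\whp} gives $\G{t}{M_t}\coup\G{t}{c'}$ by Fact~\ref{FaktCouplingGgwiazdka}(i); a Poisson of mean $\lambda'=(S_{1,t}+2\omega\sqrt{S_{1,t}})/t>c'$ gives $\G{t}{c'}\coup\G{t}{\Po{\lambda'}}$ by Fact~\ref{FaktCouplingGgwiazdka}(ii); and the trivial bound $1-e^{-x}\le x$ yields $1-\exp(-\lambda'/\binom{n}{t})\le\lambda'/\binom{n}{t}=\phat_{t+}$, so $G_t(n,1-\exp(-\lambda'/\binom{n}{t}))\coupling G_t(n,\phat_{t+})$. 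No correction term appears in $\phat_{t+}$ precisely because this direction only invokes $1-e^{-x}\le x$.

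The independence assertion is the same bookkeeping as in Theorem~\ref{LematCoupling}: the auxiliary graph is built from a \emph{deterministic} (or independently Poisson) number of draws of $t$--element subsets, and the contents of these draws are independent both of the draws producing the classes $j\neq t$ and of the entire count vector $(M_j)_j$. Concretely, one generates the count vector and, independently for each size class, an i.i.d.\ sequence of uniform subsets, letting $\Gnm{j}{\overline{p}}$ be the graph on the first $M_j$ draws of class $j$; the auxiliary $G_t(n,\cdot)$ is then coupled using only the first $c$ (respectively first $c'$) class--$t$ draws, which are by construction independent of $(\Gnm{j}{\overline{p}})_{j\neq t}$. I expect the main obstacle to be not the coupling skeleton, which is routine, but verifying that the two correction terms $tS_{1,t}^2n^{-2}$ and $t!\,S_{1,t}^2n^{-t}$ are of exactly the right order to make the displayed inequalities hold uniformly for every $2\le t\le k$ and all large $n$, while keeping the independence claim honest through the sub-- and supersampling steps.
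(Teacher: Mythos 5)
Your proposal is correct and follows essentially the same route as the paper, which proves this fact only by the remark that it follows from the identity $\Gnm{t}{\overline{p}}=\G{t}{M_t}$, the concentration bounds \eqref{RownanieMtkoncentracja} and \eqref{RownanieM2koncentracja}, and ``the same methods as in the proof of Theorem~\ref{LematCoupling}'' --- precisely the chain $G_t(n,\cdot)\coupling\G{t}{\Po{\lambda}}\coup\G{t}{c}\coup\G{t}{M_t}$ you spell out, including the role of the quadratic correction terms and the independence of the per-size-class draws.
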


As in the proof of Lemma~\ref{LematStopnie} we use the coupling of the coupon collector process on~$\V$ and the construction of $\Gnmp$. However here in the process we omit rounds in which $Y_i=t$, $2\le t\le k$. Therefore we construct $\bigcup_{j=2,\ldots,k+1;j\neq t}\Gnm{t}{\overline{p}}$ instead of $\Gnmp$. Reasoning in the same way as in the proof of Lemma~\ref{LematStopnie},  by the definition of $Q_t$, \eqref{RownanieTY}, and \eqref{RownanieMtbrakkoncentracja} we get that {\whp} the construction of $\bigcup_{j=2,\ldots,k+1;j\neq t}\Gnm{t}{\overline{p}}$ is finished before the $T_{t+}$--th draw. Similarly, if in the process we omit rounds
with $Y_i\le k$, then {\whp} the construction of $\Gnm{k+1}{\overline{p}}$ is not finished before the $T_-$ draw. Therefore an analogous reasoning to this used in the proof of Lemma~\ref{LematStopnie} gives the following facts.

\begin{fact}\label{FaktCollectorPlus}
There exists a coupling of the coupon collector process with the set of coupons $\V$ and the construction of  $\bigcup_{j=2,\ldots,k+1;j\neq t}\Gnm{j}{\overline{p}}$ such that {\whp} the number of collected coupons in $T_{t+}$ draws is at least the number of non--isolated vertices in $\bigcup_{j=2,\ldots,k+1;j\neq t}\Gnm{j}{\overline{p}}$.  
\end{fact}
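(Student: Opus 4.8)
The plan is to re-run the coupon-collector coupling from the proof of Lemma~\ref{LematStopnie}, but to execute only those phases $i$ with $Y_i\neq t$. Concretely, I would draw vertices uniformly from $\V$ one by one, grouping the draws into phases indexed by $i$; now I \emph{skip} every phase with $Y_i=t$, and in each retained phase I keep drawing until $Y_i$ distinct vertices have been collected, letting those vertices form $V'_i$ and adding a clique on $V'_i$. Since the retained phases are exactly those with $Y_i\ge 2$ and $Y_i\neq t$, the graph constructed in this way is precisely $\bigcup_{j=2,\ldots,k+1;j\neq t}\Gnm{j}{\overline{p}}$, and its set of non-isolated vertices equals $\bigcup_{i:Y_i\neq t}V'_i$. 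Writing $T_i$ for the number of draws spent in phase $i$, the construction consumes $\sum_{i:Y_i\neq t}T_i$ coupon-collector draws in total.

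The heart of the matter is to show that \whp\ the construction terminates by the $T_{t+}$-th draw. For this I would split $\sum_{i:Y_i\neq t}T_i=\sum_{i:Y_i\neq t}Y_i+\sum_{i:Y_i\neq t}(T_i-Y_i)$ and bound the two parts separately. For the first part, observe that $\sum_{i:Y_i\neq t}Y_i=M_{k+1}+\sum_{j=2}^{k}jM_j-tM_t$, which is exactly the left-hand side of \eqref{RownanieMtbrakkoncentracja}, so it is at most $S_1-S_{1,t}+2\omega\sqrt{S_1}$ \whp. For the overhead part, I would use that every term $T_i-Y_i$ is non-negative, so discarding the phases with $Y_i=t$ only decreases the sum: $\sum_{i:Y_i\neq t}(T_i-Y_i)\le\sum_{i=1}^{m}(T_i-Y_i)$, and the latter is at most $S_1/(\omega\ln n)$ \whp\ by \eqref{RownanieTY}. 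Adding the two estimates yields $\sum_{i:Y_i\neq t}T_i\le S_1-S_{1,t}+2\omega\sqrt{S_1}+S_1/(\omega\ln n)=T_{t+}$ \whp, which is exactly the assertion that the construction finishes before the $T_{t+}$-th draw.

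Finally I would convert this into the stated comparison of counts. In the coupling the coupon-collector process runs over all of $\V$, and the graph construction consumes a prefix of its draws; at the instant the last retained phase ends, the number of \emph{distinct} coupons collected equals $|\bigcup_{i:Y_i\neq t}V'_i|$, i.e.\ the number of non-isolated vertices of $\bigcup_{j=2,\ldots,k+1;j\neq t}\Gnm{j}{\overline{p}}$. Since the number of distinct coupons collected is non-decreasing in the number of draws, on the \whp\ event from the previous paragraph the number of coupons collected in $T_{t+}$ draws is at least this non-isolated count, which is what is claimed. I expect the main obstacle to lie in the second paragraph: one must check that restricting to the phases $Y_i\neq t$ does not inflate the overhead beyond the global bound \eqref{RownanieTY} (handled by non-negativity of $T_i-Y_i$), and then combine the concentration events \eqref{RownanieMtbrakkoncentracja}, \eqref{RownanieTY} (together with \eqref{RownanieYikoncentracja} controlling $\sum_iY_i$) so that their simultaneous validity is still \whp\ for a suitably slowly growing $\omega$; the translation in the last paragraph is then routine via monotonicity of the collected-coupon count.
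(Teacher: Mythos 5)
Your proposal is correct and follows essentially the same route as the paper: the paper's (sketched) argument is precisely to rerun the coupon-collector coupling of Lemma~\ref{LematStopnie} while omitting the phases with $Y_i=t$, and to combine \eqref{RownanieMtbrakkoncentracja} for $\sum_{i:Y_i\neq t}Y_i=M_{k+1}+\sum_{j=2}^{k}jM_j-tM_t$ with \eqref{RownanieTY} for the overhead $\sum_i(T_i-Y_i)$ to conclude that the construction finishes within $T_{t+}$ draws. Your write-up merely makes explicit the details (non-negativity of $T_i-Y_i$, monotonicity of the collected-coupon count) that the paper leaves implicit.
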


\begin{fact}\label{FaktCollectorMinus}
There exists a coupling of the coupon collector process with the set of coupons $\V$ and the construction of $\Gnm{k+1}{\overline{p}}$ such that {\whp} the number of collected coupons in $T_-$ draws is at most the number of non--isolated vertices in $\Gnm{k+1}{\overline{p}}$.  
\end{fact}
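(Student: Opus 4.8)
The plan is to reuse the phase-based coupling of the coupon collector process and the construction of $\Gnmp$ from the proof of Lemma~\ref{LematStopnie}, but restricted to the phases that actually contribute edges to $\Gnm{k+1}{\overline{p}}$. Concretely, I would first sample the vector $(Y_1,\ldots,Y_m)$ with the distribution of \eqref{RownanieYi}, and then generate a single infinite sequence $D_1,D_2,\ldots$ of independent uniform draws from $\V$; this sequence \emph{is} the coupon collector process. I would use a prefix of it to run the phases only for those indices $i$ with $Y_i\ge k+1$: process such phases one after another, drawing from the sequence until each phase has accumulated $Y_i$ distinct vertices, and set $V'_i$ to be that set. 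Since drawing uniformly with replacement until $Y_i$ distinct vertices appear produces a uniformly random $Y_i$-subset of $\V$, each $V'_i$ has the conditional law of $V_i$ given $|V_i|=Y_i\ge k+1$; hence connecting all pairs inside every such $V'_i$ reconstructs $\Gnm{k+1}{\overline{p}}$ exactly, and its non-isolated vertices are precisely the distinct coupons appearing in the consumed prefix.

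Next I would exploit monotonicity of the distinct-coupon count. Let $L=\sum_{i:Y_i\ge k+1}T_i$ be the length of the prefix consumed by these phases, where $T_i$ is the number of draws in phase $i$. The number of distinct coupons among the first $T_-$ draws of $D_1,D_2,\ldots$ is non-decreasing in the number of draws, so on the event $\{T_-\le L\}$ it is bounded above by the number of distinct coupons in the first $L$ draws, which equals the number of non-isolated vertices in $\Gnm{k+1}{\overline{p}}$. Thus it suffices to show $T_-\le L$ {\whp}.

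For this I would use the trivial bound $T_i\ge Y_i$ (a phase needs at least as many draws as the distinct vertices it must collect), which gives
\[
L=\sum_{i:Y_i\ge k+1}T_i\ge \sum_{i:Y_i\ge k+1}Y_i=M_{k+1},
\]
the last equality because $Y_i\in\{0\}\cup\{2,3,\ldots\}$ forces $\sum_{i=1}^{m}Y_i-\sum_{t=2}^{k}tM_t=\sum_{i:Y_i\ge k+1}Y_i$. By the concentration estimate \eqref{RownanieM3koncentracja}, {\whp} $M_{k+1}\ge S_1-\sum_{t=2}^{k}S_{1,t}-k\omega\sqrt{S_1}$, and this right-hand side is exactly the quantity bounding $T_-$ from its definition in \eqref{RownanieTp}; when the maximum defining $T_-$ equals $0$ the inequality $T_-\le L$ is immediate. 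Combining, {\whp} $T_-\le M_{k+1}\le L$, which is precisely what the monotonicity step needs.

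The distributional identity for each $V'_i$ and the concatenation of phase draws into one uniform sequence are routine; the only place invoking earlier machinery is the lower bound $M_{k+1}\ge T_-$, which rests on \eqref{RownanieM3koncentracja}. I expect the main point requiring care to be purely a matter of bookkeeping: verifying that omitting the phases with $Y_i\le k$ still yields a genuine coupon collector process on $\V$ — it does, since within each retained phase the draws remain i.i.d.\ uniform and the retention decision depends only on the pre-sampled $Y_i$ — and that the ``construction finished in at least $T_-$ draws'' statement translates, via monotonicity, into the claimed upper bound on the number of collected coupons.
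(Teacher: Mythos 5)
Your proposal is correct and follows essentially the same route the paper takes: omit the phases with $Y_i\le k$, note that the retained phases consume at least $\sum_{i:Y_i\ge k+1}Y_i=M_{k+1}$ draws, invoke \eqref{RownanieM3koncentracja} to get $M_{k+1}\ge T_-$ {\whp}, and conclude by monotonicity of the distinct-coupon count. The paper only sketches these steps in the paragraph preceding the fact; your write-up supplies the same bookkeeping in full.
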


If there exists a vertex which is isolated in $\bigcup_{t=3}^{k+1}\Gnm{t}{\overline{p}}$ and has degree at most $k-1$ in $\Gnm{2}{\overline{p}}$, then $\delta(\Gnmp)\le k-1$ and if each vertex in $\Gnm{k+1}{\overline{p}}$ is non-isolated or has degree at least $k$ in $\Gnm{2}{\overline{p}}$, then $\delta(\Gnmp)\ge k$. The lemmas follow by the above facts and by the first and the second moment method, which is used to count the vertices, which are isolated in a considered auxiliary graph or have degree at least or less than k in another auxiliary graphs.
Technical details of the proofs are postponed to Appendix.

\section{Structural properties of $\Gn{\hat{p}_2}\cup\Gh{\hat{p}_3}$}

In this section we give several structural results concerning $\Gn{\hat{p}_2}\cup\Gh{\hat{p}_3}$, which are analogous to those known for $\Gn{\hat{p}_2}$. Then we will show how combine those properties with the coupling constructed in the proof of Theorem~\ref{LematCoupling} in order to get the threshold functions of $\Gnmp$.  

For any graph $G$ and any set $S\subseteq V(G)$ denote by $N_G(S)$ the set of neighbours of vertices form $S$ contained in $V(G)\setminus S$. 
For simplicity we write  $N_2(\cdot)=N_{\Gn{\hat{p}_2}}(\cdot)$ and $N_3(\cdot)=N_{\Gsuma}(\cdot)$. Note that given $\Gsuma$ we have $|N_2(S)|\le |N_3(S)|$. 
%
%
%
%

\begin{lem}\label{LematWlasnosciGsuma}
Let $k$ and $C$ be positive integers and
\begin{align*}
\pdwa+\frac{n}{2}\ptrzy&=\frac{\theta_n\ln n}{n},
\quad
\text{where }
\liminf_{n\to\infty}\theta_n=\theta>\frac{1}{2}\text{ and }\theta_n=O(1),\\
\pdwa&=\frac{\theta'_n\ln n}{n},\quad
\text{where }
\liminf_{n\to\infty}\theta'_n=\theta'>\frac{1}{2}\text{ and }\theta'_n=O(1).\end{align*}
Let moreover $\gamma$ be a real number such that $1-\theta'<\gamma<1$.
Then {\whp} $\Gsuma$ has the following properties: 
\begin{itemize}
\item[(i)] $\mathcal{B}_1$ -- for all $S\subseteq \V$ such that $n^{\gamma}\le |S|\le \frac{1}{4}n$ 
$$
|N_3(S)|>2|S|.
$$
\item[(ii)] $\mathcal{B}_2$ -- for all $S\subseteq \V$ such that $n^{\gamma}\le |S|\le \frac{2}{3}n$ 
\begin{align*}
|N_3(S)|
&>\min\{|S|,4n\ln \ln n/\ln n\}\\
&>\min\{|S|,\alpha(\Gsuma)\},
\end{align*}
where $\alpha(\Gsuma)$ is the stability number of $\Gsuma$.
\item[(iii)] $\mathcal{B}_{3,k}$ -- for all $S\subseteq \V$ if $1\le |S|\le n^{\gamma}$ and all vertices in $S$ have degree at least $4k+14$ in $\Gsuma$ we have
$$
|N_3(S)|\ge 2k|S|.
$$
\item[(iv)] $\mathcal{B}_{4,C}$ -- any two vertices of degree at most $C$ are at distance at least $6$.
\item[(v)] $\mathcal{B}_5$ -- contains a path of length at least 
$$
\left(1-\frac{8\ln 2}{\ln n}\right)n.
$$
\end{itemize} 
\end{lem}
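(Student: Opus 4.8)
The plan is to prove (i)--(iv) by the first moment method and to deduce (v) from the expansion in (i). The single estimate underlying (i)--(iii) is the following. For disjoint $S,T\subseteq\V$, the event that no edge of $\Gn{\pdwa}$ joins $S$ to $T$ and the event that no hyperedge of $H_3(n,\ptrzy)$ meets both $S$ and $T$ are independent, so that with $|S|=s$ and $|T|=t$,
\[
\Pra{N_3(S)\cap T=\emptyset}=(1-\pdwa)^{st}(1-\ptrzy)^{\Phi(s,t)},
\]
where $\Phi(s,t)=\binom{n}{3}-\binom{n-s}{3}-\binom{n-t}{3}+\binom{n-s-t}{3}$ counts the triples meeting both $S$ and $T$. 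For $t$ close to $n$ the leading term is $\Phi(s,t)\sim s\binom{t}{2}\sim st\cdot\tfrac n2$, so $-\ln\Pra{N_3(S)\cap T=\emptyset}\sim st(\pdwa+\tfrac n2\ptrzy)=st\cdot\tfrac{\theta_n\ln n}{n}$. This is exactly why the two combinations $\pdwa+\tfrac n2\ptrzy$ and $\pdwa$ appearing in the hypotheses are the relevant scales.

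For (i) I would union bound over separated configurations: if $|N_3(S)|\le 2s$ then some $R$ with $|R|=2s$ contains $N_3(S)$, whence $N_3(S)\cap T=\emptyset$ for $T=\V\setminus(S\cup R)$, $|T|=n-3s$. Thus
\[
\Pra{(\mathrm{i})\text{ fails at }|S|=s}\le\binom{n}{s}\binom{n-s}{2s}(1-\pdwa)^{s(n-3s)}(1-\ptrzy)^{\Phi(s,n-3s)}.
\]
Bounding $\binom{n}{s}\binom{n-s}{2s}\le\exp(3s\ln(en/s))$ and inserting the exponent above, I would show the sum over $n^\gamma\le s\le n/4$ is $o(1)$. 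For $s=\Theta(n)$ the survival term carries a factor of order $n\ln n$ that dominates the entropy factor of order $n$, so those terms are harmless; the delicate end is $s\approx n^\gamma$, where the per-term exponent is $\approx s\ln n\,(3(1-\gamma)-\theta+o(1))$ and one must verify, by careful accounting of the constants, that the hypotheses $\theta,\theta'>\tfrac12$ and $\gamma>1-\theta'$ keep it negative across the range. Property (ii) is the same computation with the weaker requirement $|N_3(S)|>\min\{|S|,4n\ln\ln n/\ln n\}$ over the wider range $|S|\le\tfrac23 n$; the bound $\alpha(\Gsuma)\le 4n\ln\ln n/\ln n$ is itself a first-moment estimate, reducing once more to $\Pra{N_3(S)\cap T=\emptyset}$ with $T=\V\setminus S$.

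Property (iii) needs a different device for $|S|\le n^\gamma$, where unconditional expansion can fail and the degree hypothesis is indispensable. If every vertex of $S$ has degree at least $4k+15$ but $|N_3(S)|<2k|S|$, then $W:=S\cup N_3(S)$ has fewer than $(2k+1)|S|$ vertices yet is incident to at least $\tfrac12(4k+15)|S|-\binom{|S|}{2}$ edges of $\Gsuma$: an abnormally dense small set. I would union bound over the choice and size of $W$ and the number of spanning $G_2$-edges and $G_3$-hyperedges (the latter counted through $\Phi$), and show that such sets do not occur {\whp} once $|S|\le n^\gamma$. For (iv) I would estimate the expected number of pairs $u,v$ with $\deg u,\deg v\le C$ and $\mathrm{dist}(u,v)\le 5$. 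A single vertex has degree at most $C$ with probability $n^{-\theta+o(1)}$, because the relevant rate (the expected number of incident $G_2$-edges and $G_3$-hyperedges) is $\sim\theta\ln n$; two such vertices joined by a path on at most four internal vertices contribute only a polylogarithmic factor, so the expected number of such pairs is $n^{1-2\theta+o(1)}=o(1)$ precisely because $\theta>\tfrac12$.

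For (v) I would abandon the first moment and feed (i)--(ii) into a depth-first-search (equivalently rotation--extension) argument: properties (i) and (ii) preclude two disjoint sets of size at least $n^\gamma$ with no edge between them, and in a DFS the finished and the untouched vertices have no edges between them, so one of them stays below $n^\gamma$ while the current path absorbs the rest. The vertices the argument cannot incorporate sit in non-expanding pieces of size below $n^\gamma$, of total size $o(n/\ln n)$, comfortably within the stated near-spanning length $\left(1-\tfrac{8\ln 2}{\ln n}\right)n$. The main obstacle throughout is closing the constants in the union bounds for (i)--(iii): the dependence among the edges of $\Gh{\ptrzy}$ forces one to count hyperedges via $\Phi(s,t)$ rather than edges, and the survival exponent, on the scale $st\cdot\tfrac{\theta_n\ln n}{n}$, must be balanced against the entropy terms $\binom{n}{s}$ and $\binom{n}{2s}$ uniformly over the whole range $n^\gamma\le s\le\tfrac n4$ (resp.\ $\tfrac23 n$); the small-set end, where the degree hypothesis of (iii) takes over from bare expansion, is where the argument is tightest.
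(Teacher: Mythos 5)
Your outline for (iii) and (iv) matches the paper's argument (dense small sets via a first moment over edge/hyperedge counts; a first moment over pairs of low-degree vertices at distance at most $5$), but there is a genuine gap at the heart of (i)--(ii), precisely at the ``delicate end'' you flag. Your union bound pays the entropy of choosing the containing set $R$ with $|R|=2s$, so the per-term cost is $\binom{n}{s}\binom{n-s}{2s}\le\exp\bigl((3+o(1))(1-\gamma)s\ln n\bigr)$ against a survival exponent of only $(1+o(1))\,s\,\theta_n\ln n$; you therefore need $3(1-\gamma)<\theta$. The lemma's hypotheses give only $1-\gamma<\theta'$ and $\theta,\theta'>\tfrac12$, and these do not imply your condition: take $\theta_n\to\theta=\theta'=0.6$ and $\gamma=0.45$, which satisfies $\gamma>1-\theta'=0.4$ yet has $3(1-\gamma)=1.65>\theta$. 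The paper avoids this loss entirely: for $n^{\gamma}\le|S|\le 4n\ln\ln n/\ln n$ it observes that $|N_2(S)|$ is a single binomial random variable $\Bin{n-s}{1-(1-\pdwa)^s}$ with mean $\gg 2|S|$, and applies Chernoff's inequality for a deviation to a $o(1)$ fraction of the mean, getting the full exponent $\exp(-(1+o(1))(n-s)(1-(1-\pdwa)^s))$ at an entropy cost of only $\binom{n}{s}$ --- which is exactly what the hypothesis $\gamma>1-\theta'$ is calibrated for. The pairwise ``no edges between $S$ and $T$'' union bound is used in the paper only for $|S|\ge r=4n\ln\ln n/\ln n$ (absence of $\overline{K}_{r}$ and $\overline{K}_{r,r}$), where the quadratic survival exponent $r^2\theta\ln n/n$ overwhelms the entropy. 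To repair your proof you must replace the union bound over $R$ by a concentration estimate for $|N_2(S)|$ (or $|N_3(S)|$) itself on the small range.

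Part (v) is also not salvageable as you describe it. Properties (i)--(ii) do \emph{not} preclude two disjoint sets of size $n^{\gamma}$ with no edge between them ($N_3(S)$ can exceed $2|S|$ while avoiding $T$ entirely), and the first-moment threshold for excluding two disjoint edge-free sets of equal size in $\Gn{\pdwa}$ with $\pdwa=\theta'\ln n/n$ is $\Theta(n\ln\ln n/\ln n)$, so a DFS or rotation--extension argument built on such expansion can only deliver a path missing $\Theta(n\ln\ln n/\ln n)$ vertices --- weaker than the stated $\bigl(1-\tfrac{8\ln 2}{\ln n}\bigr)n$, and the constant matters downstream in Lemma~\ref{LematGnHamilton}, where exactly $t=8n/\ln n$ sprinkling rounds are budgeted. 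The paper simply invokes Theorem~8.1 of \cite{KsiazkaBollobas} for the monotone subgraph $\Gn{\pdwa}$.
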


\begin{proof}
\noindent(i) and (ii)\  
By Chernoff's inequality (for the proof see for example Theorem~2.1 in \cite{KsiazkaJLR}) for any $\delta=\delta(n)=o(1)$ and any binomial random variable $X=X_n$ we have 
$$
\Pra{X\le \delta \E X}\le \exp(-\E X(\delta\ln \delta + 1 - \delta)) =\exp(-(1+o(1))\E X).
$$
First consider the case $n^{\gamma}\le |S|\le 4n\ln\ln n/\ln n$. 
Let $s=|S|$. Then
$|N_2(S)|$ has the binomial distribution 
$\Bin{n-s}{1-(1-\pdwa)^s}$ with the expected value $\E |N_2(S)|\gg 2|S|$.
Denote by $X$ the number of sets $S$ of cardinality\linebreak $n^{\gamma}\le |S|\le 4n\ln\ln n/\ln n$ such that $|N_2(S)|\le 2|S|$. Using Chernoff's inequality we get
\begin{align*}
\E X
\le&
\sum_{s=n^{\gamma}}^{4 n\ln\ln n/\ln n}
\binom{n}{s}\exp\left(-(1+o(1))(n-s)(1-(1-\pdwa)^s)\right)
\\
\le&
\sum_{s=n^{\gamma}}^{n/(\ln n\ln\ln n)}
\exp\left(s\left(1+\ln\frac{n}{s}\right)-(1+o(1))sn\pdwa \right)\\
&+
\sum_{s=n/(\ln n\ln\ln n)}^{4 n\ln\ln n/\ln n}
\exp\left(s\left(1+\ln\frac{n}{s}\right)-n\frac{\theta'}{2\ln \ln n}\right)\\
\le&
\sum_{s=n^{\gamma}}^{n/(\ln n\ln\ln n)}
\exp\left(s\left(1+(1-\gamma)\ln n-(1+o(1))\theta'\ln n\right)\right)\\
&+
\sum_{s=n/(\ln n\ln\ln n)}^{4 n\ln\ln n/\ln n}
\exp\left(s\left((1+o(1))\ln \ln n - \frac{\theta'\ln n}{8(\ln \ln n)^2}\right)\right)\\
=&\ o(1).
\end{align*}
Therefore {\whp} for all sets $S\subseteq \V$, $n^{\gamma}\le |S|\le 4n\ln\ln n/\ln n$, we have
$$
|N_2(S)|\ge 2|S|.
$$ 

Now consider the case $|S|\ge 4n\ln\ln n/\ln n$. Let $r=4n\ln\ln n/\ln n$. 
Let moreover $\overline{K}_r$ and  $\overline{K}_{r,r}$ be the complement of the complete graph on $r$ vertices and the complement of the complete bipartite graph with each set of bipartition of cardinality $r$. 
Denote by $X_r$ and $X_{r,r}$ the number of $\overline{K}_r$ and $\overline{K}_{r,r}$ in $\Gsuma$, respectively. Then
\begin{align*}
\E X_r
&=
\binom{n}{r}(1-\pdwa)^{\binom{r}{2}}(1-\ptrzy)^{\binom{r}{3}+\binom{r}{2}(n-r)}\\
&\le 
\left(
\frac{en}{r}\exp\left(-\frac{r}{2}\left(\pdwa+(n-r)\ptrzy\right)+o(1)\right)
\right)^r\\
&\le
\left(
\frac{e\ln n}{4\ln \ln n}\exp\left(-2\theta_n\ln \ln n\right)
\right)^r=o(1)
\end{align*}
and
\begin{align*}
\E X_{r,r}
&\le
\binom{n}{r}^2(1-\pdwa)^{r^2}(1-\ptrzy)^{2\binom{r}{2}r+r^2(n-2r)}\\
&\le 
\left(
\frac{en}{r}\exp\left(-\frac{r}{2}(\pdwa+(n-2r)\ptrzy)+o(1)\right)
\right)^{2r}=o(1).
\end{align*}
Therefore {\whp} $X_r=0$ which implies that {\whp} 
$$\alpha(\Gsuma)\le r.$$ 
Moreover {\whp} $X_{r,r}=0$ thus {\whp}
 for any $S\subseteq \V$ such that $r\le |S|$ we have 
$$
N_3(S)\ge n-|S|-r.
$$
(Otherwise $\Gsuma$ would contain $\overline{K}_{r,r}$.)\\
Therefore {\whp} for any $S\subseteq \V$ such that $r\le |S|\le 2n/3$
$$N_3(S) \ge \min\{|S|,r\}
\ge \min\{|S|,\alpha(\Gsuma)\}.
$$ 
Moreover {\whp} for any $S\subseteq \V$ such that $r\le |S|\le n/4$ we have
$$
N_3(S)\ge n-|S|-r\ge 2|S|.
$$
This finishes the proof of (i) and (ii).

\noindent
(iii)
For any two disjoint sets $S\subseteq \V$ and $S'\subseteq \V$ we denote by
 $e(S;S')$ the number of edges in $\Gsuma$ with one end in  $S$ and one end in $S'$ and by
 $e(S)$ the number of edges in $\Gsuma$ with both ends in $S$. We will bound the numbers $e(S)$ and $e(S,S')$ for $S\subseteq \V$ and $S'\subseteq \V\setminus S$ such that $1\le |S|\le n^{\gamma}$ and $|S'|=O(|S|)$. 
 Let $k$ be a positive integer, $S\subseteq \V$, $S'\subseteq \V\setminus S$, $|S|=s$, $1\le s\le n^{\gamma}$ and $|S'|=2ks$. 
A pair $\{v,v'\}$ ($v\in S$ and $v'\in S'$) is an edge in $\Gsuma$ if at least one of the three  following events occurs:  
\begin{itemize}
\item $\{v,v'\}$ is an edge in $\Gn{\pdwa}$, 
\item there is a hyperedge $\{v,v',v''\}$ with $v''\in \V\setminus(S\cup S')$ in $H_3(n,\ptrzy)$ associated with $\Gh{\ptrzy}$, 
\item there is a hyperedge $\{v,v',v''\}$ with $v''\in S\cup S'$ in $H_3(n,\ptrzy)$ associated with $\Gh{\ptrzy}$.
\end{itemize}
 The union of the two first events occurs  with probability at most $\pdwa+(n-(2k+1)s)\ptrzy=O(\ln n/n)$ independently for all $v\in S$ and $v'\in S'$. Moreover each hyperedge of $H_3(n,\ptrzy)$ with all vertices in $S\cup S'$ appears independently with probability $\ptrzy=O(\ln n/n^2)$ and generates two edges between $S$ and $S'$ in $\Gh{\ptrzy}$. Therefore 
\begin{align*}
&\Pra{
\exists_{\begin{subarray}{l} S,S'\subseteq\V,\\
|S'|=2k|S|\\
1\le |S|\le n^{\gamma}
\end{subarray}} 
e(S,S')\ge 4(k+1)|S|}\\
&\quad\le
\sum_{s=1}^{n^{\gamma}}
\binom{n}{s}\binom{n-s}{2ks}
\Bigg[\binom{2ks^2}{(2k+2)s}(\pdwa+(n-(2k+1)s)\ptrzy)^{(2k+2)s}\\
&\quad\quad+
\binom{\binom{s}{2}2ks+\binom{2ks}{2}s}{(k+1)s}\ptrzy^{(k+1)s}
\Bigg]
\\
&\quad\le 
\sum_{s=1}^{n^{\gamma}}
\Bigg[
\left(O(1)\left(\frac{n}{s}\right)^{2k+1}\left(\frac{s\ln n}{n}\right)^{2k+2}\right)^s\\
&\quad\quad+
\left(O(1)\left(\frac{n}{s}\right)^{2k+1}s^{k+1}\left(\frac{\ln n}{n^2}\right)^{k+1}\right)^s\Bigg]
\\
&\quad\le
\sum_{s=1}^{n^{\gamma}}
\Bigg[\left(
O(1)\frac{\ln^{2k+2}n}{n^{1-\gamma}}
\right)^s
+
\left(
O(1)\frac{\ln^{k+1} n}{n}
\right)^s\Bigg]
\\
&\quad=o(1).
\end{align*}
Similarly
\begin{align*}
&\Pra{
\exists_{\begin{subarray}{l} S\subseteq\V\\
1\le |S|\le n^{\gamma}
\end{subarray}} 
e(S)\ge 5|S|}\\
&\quad\le
\sum_{s=1}^{n^{\gamma}}
\binom{n}{s}
\Bigg[\binom{\binom{s}{2}}{2s}(\pdwa+(n-s)\ptrzy)^{2s}
+
\binom{\binom{s}{3}}{s}\ptrzy^{s}
\Bigg]\\
&\quad\le
\sum_{s=1}^{n^{\gamma}}
\left(
O(1)\frac{\ln^{2}n}{n^{1-\gamma}}
\right)^s
+
\left(
O(1)\frac{\ln n}{n^{1-\gamma}}
\right)^s
\\
&\quad=o(1).
\end{align*}
Therefore {\whp} for any set $S$ ($1\le |S|\le n^{\gamma}$) of vertices of degree at least $4k+14$ in $\Gsuma$ we have
$$
|N_3(S)|\ge 2k|S|.
$$
Otherwise for $S'=N_3(S)$ there would be $|S'|=|N_3(S)|\le 2ks$ and  $e(S,N_3(S))+2e(S)\ge (4k+14)|S|$, i.e. $e(S,N_3(S))\ge 4(k+1)|S|$ or $e(S)\ge 5|S|$.

\noindent(iv) The probability that in $\Gsuma$ there are two vertices of degree at most $C$ at distance at most $5$ is at most
\begin{align*}
&n^2\sum_{l=0}^{4}n^l(\pdwa+n\ptrzy)^{l+1}\\
&\cdot
\Bigg(\sum_{i=0}^{C}
\sum_{j=0}^{\lfloor i/2\rfloor}
\binom{\binom{n-1-l}{2}}{j}\ptrzy^{j}
\binom{n-1-l}{i-2j}\pdwa^{i-2j}\\
&\cdot
(1-\pdwa)^{n-1-i-l}
(1-\ptrzy)^{\binom{n-1-l}{2}-\binom{i+l}{2}}
\Bigg)^2\\
&=O(1)n(\ln n)^{2C+5}e^{-2n(\pdwa+\frac{n}{2}\ptrzy)}=o(1).
\end{align*}
\noindent
(v) Follows by Theorem~8.1 from \cite{KsiazkaBollobas}.
\end{proof}
\begin{lem}\label{LematMinDegreeGsuma}
Let $k$ be a positive integer and $\pdwa=\pdwa(n)\in(0,1)$, $\ptrzy=\ptrzy(n)\in (0,1)$ be such that
$$
\pdwa+\frac{n}{2}\ptrzy=\frac{\ln n + (k-1)\ln \ln n + c_n}{n}.
$$
\begin{itemize}
\item[(i)] If $k=1$ then  $$\Pra{\delta(\Gsuma)\ge 1}\to f(c_n),$$
where $f(\cdot)$ is defined by \eqref{RownanieFcn}.
\item[(ii)] If $c_n\to \infty$ then $$\Pra{\delta(\Gsuma)\ge k}\to 1.$$
\end{itemize}

\end{lem}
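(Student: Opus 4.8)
The plan is to argue directly with the count of low-degree vertices, exploiting that $\Gn{\pdwa}$ and $\Gh{\ptrzy}$ are independent and that, in $\Gh{\ptrzy}$, a vertex $v$ is isolated precisely when no hyperedge of the underlying $3$-uniform hypergraph contains $v$. Write $\lambda=\ln n+(k-1)\ln\ln n+c_n$ and set $x=n\pdwa$, $y=\binom{n-1}{2}\ptrzy$, so that $x+y=n\big(\pdwa+\tfrac n2\ptrzy\big)+o(1)=\lambda+o(1)$ and, since $\pdwa=O(\ln n/n)$ and $\ptrzy=O(\ln n/n^2)$, both $x,y\le\lambda\sim\ln n$. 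For part (i) I would apply the method of factorial moments to the number $Z$ of isolated vertices; for part (ii) a first-moment bound on the number of vertices of degree at most $k-1$.

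For (i), a fixed vertex $v$ is isolated iff it has no $\Gn{\pdwa}$-edge and lies in no hyperedge generating $\Gh{\ptrzy}$, so by independence
\[
\Pra{\deg_{\Gsuma}(v)=0}=(1-\pdwa)^{n-1}(1-\ptrzy)^{\binom{n-1}{2}}=e^{-x-y+o(1)}=e^{-\lambda+o(1)},
\]
whence $\E Z=e^{-c_n+o(1)}$. More generally, $r$ fixed vertices are simultaneously isolated iff none of the $\binom r2+r(n-r)$ pairs meeting them is a $\Gn{\pdwa}$-edge and none of the $\binom n3-\binom{n-r}3$ triples meeting them is a hyperedge; since $\binom r2+r(n-r)=rn-O(1)$ and $\binom n3-\binom{n-r}3=\tfrac r2 n^2+O(n)$, this probability equals $e^{-r\lambda+o(1)}$. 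Hence $\E[(Z)_r]=(n)_r\,e^{-r\lambda+o(1)}\to e^{-rc}$ when $c_n\to c$, and the method of moments gives $Z\to\Po{e^{-c}}$ in distribution, so $\Pra{\delta(\Gsuma)\ge1}=\Pra{Z=0}\to e^{-e^{-c}}=f(c_n)$. The cases $c_n\to+\infty$ and $c_n\to-\infty$ follow from $\E Z\to0$ (Markov) and from $\E Z\to\infty$ together with $\E[(Z)_2]\sim(\E Z)^2$ (Chebyshev), respectively.

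For (ii) the new feature is that the $\Gh{\ptrzy}$-neighbours of $v$ arrive in pairs: each hyperedge $\{v,u,w\}$ makes both $u$ and $w$ neighbours. The plan is to show $\E\big[\#\{v:\deg_{\Gsuma}(v)\le k-1\}\big]\to0$. Fix $v$ and $0\le j\le k-1$. The dominant configurations giving $\deg_{\Gsuma}(v)=j$ consist of $b$ pairwise vertex-disjoint hyperedges through $v$ together with $j-2b$ ordinary $\Gn{\pdwa}$-edges, all $j$ endpoints distinct, multiplied by the external-absence factor that no further edge or hyperedge touches $v$, which contributes $(1-\pdwa)^{n-1-j}(1-\ptrzy)^{\binom{n-1}{2}-\binom j2}=e^{-\lambda+o(1)}$. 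Counting the ways to place the $b$ pairs and $j-2b$ singletons among the $n-1$ other vertices gives
\[
\E\big[\#\{v:\deg_{\Gsuma}(v)=j\}\big]\sim n\,e^{-\lambda}\sum_{b=0}^{\lfloor j/2\rfloor}\frac{y^{b}x^{j-2b}}{b!\,(j-2b)!},
\]
which is exactly $\Pra{\Po{x}+2\Po{y}=j}$, reflecting that the numbers of $\Gn{\pdwa}$-edges and of hyperedges at $v$ are asymptotically independent Poisson variables with means $x$ and $y$.

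It remains to bound this sum. Since $x,y\ge0$ with $x+y=\lambda+o(1)$, we have $y^{b}x^{j-2b}\le\lambda^{\,j-b}$, so the $b$-th term is $O\big((\ln n)^{\,j-b}\big)$, maximised at $b=0$, and the whole sum is $O\big((\ln n)^{\,j}\big)$. As $e^{-\lambda}=e^{-c_n}/\big(n(\ln n)^{k-1}\big)$, this yields
\[
\E\big[\#\{v:\deg_{\Gsuma}(v)=j\}\big]=O\!\left(e^{-c_n}(\ln n)^{\,j-k+1}\right),
\]
which is $O(e^{-c_n})$ for $j=k-1$ and $o(e^{-c_n})$ for $j<k-1$; summing over $0\le j\le k-1$ gives $O(e^{-c_n})\to0$, so {\whp} $\delta(\Gsuma)\ge k$. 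By monotonicity one may first reduce to $c_n=O(\ln\ln n)$, so that $\lambda\sim\ln n$ and the bound $x,y\le\lambda$ is useful. The main obstacle is precisely this batching-related step: I must verify that configurations in which the hyperedges through $v$ overlap — sharing endpoints, or coinciding with $\Gn{\pdwa}$-neighbours, so that fewer than $2b$ distinct neighbours are produced — contribute only lower-order terms, each extra coincidence costing a further factor $O\big(n^{-1}(\ln n)^{O(1)}\big)$. This is what makes the clean count above capture the leading order uniformly over the range $x+y\sim\ln n$, including the regime in which $y$ rather than $x$ is the larger of the two.
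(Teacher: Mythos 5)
Your proposal is correct, and it is worth separating the two parts. For part (ii) your argument is essentially the paper's: the paper also takes a first moment of the number $X_t$ of vertices of degree $t\le k-1$, bounding $\E X_t\le n\binom{n-1}{t}\sum_{i}\binom{\binom t2}{i}\ptrzy^i\pdwa^{\max\{t-2i,0\}}(1-\pdwa)^{n-t-1}(1-\ptrzy)^{\binom{n-1}{2}-\binom t2}=O(n(\ln n)^te^{-\lambda})=O(e^{-c_n})$. Note that the ``main obstacle'' you flag --- overlapping hyperedges producing fewer than $2b$ distinct neighbours --- is absorbed in the paper automatically: it sums over the number $i$ of hyperedges at $v$ lying inside the neighbourhood (not only vertex-disjoint families) and uses the exponent $\max\{t-2i,0\}$ for the remaining ordinary edges, which gives a valid upper bound of order $(\ln n)^{t-i}$ per term without ever isolating the ``clean'' configurations; your coincidence-costs-a-factor-$n^{-1}$ accounting reaches the same place but is the more laborious route. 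For part (i) the paper gives no argument at all --- it simply cites Theorem~3.10 of \cite{Magisterka} --- so your self-contained proof via factorial moments of the number of isolated vertices (showing $\E(Z)_r\to e^{-rc}$ and hence $Z\to\Po{e^{-c}}$, with Markov and Chebyshev covering $c_n\to\pm\infty$) is a genuine addition; the computation of the pair and triple counts meeting an $r$-set is right, and the independence of $\Gn{\pdwa}$ and $\Gh{\ptrzy}$ makes it routine. The only cosmetic caveat is that for $c_n\to-\infty$ one should, as you do for (ii), first reduce by monotonicity to $c_n$ tending to $-\infty$ slowly enough that $\pdwa+\tfrac n2\ptrzy$ stays nonnegative and $\lambda\sim\ln n$.
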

\begin{proof}
(i) Follows by Theorem~3.10 from~\cite{Magisterka}.

\noindent(ii) 
Let $X_t$ be the number of vertices of degree $0\le t\le k-1$ in $\Gsuma$. Then
\begin{align*}
\E X_t
&\le n\binom{n-1}{t}
\sum_{i=0}^{t}\binom{\binom{t}{2}}{i}\ptrzy^{i}\pdwa^{\max\{t-2i,0\}}
(1-\pdwa)^{n-t-1}(1-\ptrzy)^{\binom{n-1}{2}-\binom{t}{2}}\\
&=O(1)n(\ln n)^{t}\exp(-\ln n + (k-1)\ln \ln n + c_n)=o(1).
\end{align*}
Thus {\whp} $X_t=0$ for all $t\le k-1$.
\end{proof}

\begin{lem}\label{LematGnCkPM}
Let $k$ be a positive integer, $\liminf_{n\to\infty}\theta_n = \theta>\frac{1}{2}$, $\liminf_{n\to\infty}\theta_n' = \theta'>\frac{1}{2}$ and $\theta, \theta'=O(1)$.
Let $\rg(n)$ be a random graph such that 
for 
\begin{equation}\label{RownanieWarunekPdwaPtrzy}
\pdwa+\frac{n}{2}\ptrzy=\frac{\theta_n\ln n}{n}\text{ and }\pdwa=\frac{\theta'_n\ln n}{n}
\end{equation}
we have
\begin{equation}\label{RownanieCouplingGsumaGn}
\Gsuma\coup \rg(n)
\end{equation}
and in the probability space of the coupling {\whp} all vertices of degree at most $4k+13$ in $\Gsuma$ are at distance at least~$6$ in $\rg(n)$.
If $\Pra{\delta(\rg(n)\ge k)}$ is bounded away from zero by a constant then 
\begin{align}
&\Pra{\rg(n)\in \mathcal{C}_k|\delta(\rg(n))\ge k}\to 1,\label{RownanieGnCk}\\
&\Pra{\rg(2n)\in \mathcal{PM}|\delta(\rg(2n))\ge 1}\to 1 \label{RownanieGnPM}.
\end{align}
\end{lem}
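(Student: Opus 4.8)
The plan is to turn both assertions into deterministic implications of the structural properties of Lemma~\ref{LematWlasnosciGsuma}, read off $\rg(n)$ through the coupling. First I would fix the probability space of the coupling \eqref{RownanieCouplingGsumaGn}. On it, with probability $1-o(1)$, the graph $\Gsuma$ enjoys properties $\mathcal{B}_1$ through $\mathcal{B}_{4,C}$ (with $C=4k+14$), we have $\Gsuma\subseteq\rg(n)$, and every vertex of $\Gsuma$-degree at most $4k+14$ is at distance at least $6$ in $\rg(n)$. Since $\Pra{\delta(\rg(n))\ge k}$ is bounded below by a positive constant, all of these events still hold with probability $1-o(1)$ \emph{conditionally} on $\delta(\rg(n))\ge k$. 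Because $\Gsuma\subseteq\rg(n)$, any lower bound on $|N_3(\cdot)|$ is also a lower bound on the corresponding neighbourhood in $\rg(n)$, while high $\Gsuma$-degree implies high $\rg(n)$-degree; so it remains to show that a graph satisfying these properties together with $\delta\ge k$ is $k$-connected, and (on $2n$ vertices, $k=1$) has a perfect matching.

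For \eqref{RownanieGnCk}, suppose $\rg(n)$ is not $k$-connected. Then there is a cut $T$ with $|T|\le k-1$ and a component $A$ of $\rg(n)-T$ with $|A|\le (n-|T|)/2$; since $A$ is a component, $|N_3(A)|\le |N_{\rg(n)}(A)|\le |T|\le k-1$. If $|A|\ge n^{\gamma}$ this contradicts $\mathcal{B}_2$, which gives $|N_3(A)|>\min\{|A|,4n\ln\ln n/\ln n\}\ge k$. So assume $1\le |A|\le n^{\gamma}$ and split $A=A'\cup A''$, where $A''$ collects the vertices of $\Gsuma$-degree at most $4k+14$ and $A'$ the rest. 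The case $|A|=1$ is excluded by the degree hypothesis (a single vertex would have all its $\ge k$ neighbours in $T$). If $A''=\emptyset$ then $\mathcal{B}_{3,k}$ applied to $A$ gives $2k|A|\le k-1$, impossible. Otherwise I would argue that each $v\in A''$ keeps at least $k-(k-1)=1$ neighbour inside $A$, and each $u\in A'$ keeps at least $4k+15-(k-1)\ge 2$; since the vertices of $A''$ are pairwise at distance at least $6$ in $\rg(n)$, their radius-$2$ balls inside $A$ are disjoint and each contains at least three vertices of $A$, whence $|A|\ge 3|A''|$ and $|A'|\ge 2|A''|$. But $A'$ is an all-high-degree set of size at most $n^{\gamma}$, so $\mathcal{B}_{3,k}$ yields $|N_3(A')|\ge 2k|A'|$, while $N_3(A')\subseteq T\cup A''$ forces $|N_3(A')|\le (k-1)+|A''|$. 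Combining, $4k|A''|\le 2k|A'|\le (k-1)+|A''|$, impossible for $|A''|\ge 1$. This contradiction gives $k$-connectivity and hence \eqref{RownanieGnCk}.

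For \eqref{RownanieGnPM} I would use the Gallai--Edmonds/Tutte--Berge description of a graph $H=\rg(2n)$ with no perfect matching: there is a set $A^{*}$ such that $H-A^{*}$ has at least $|A^{*}|+2$ odd components $D_1,\dots,D_c$, each factor-critical and with external neighbourhood $N(D_j)\subseteq A^{*}$. It suffices to find a system of distinct representatives assigning a private vertex of $A^{*}$ to each $D_j$, since that forces $c\le |A^{*}|$, a contradiction; equivalently, by Hall's theorem, it suffices to verify that every union $U$ of $\ell$ of these components satisfies $|N(U)|\ge \ell$, with $N(U)\subseteq A^{*}$. For an all-high-degree union of size at most $n^{\gamma}$ this is immediate from $\mathcal{B}_{3,1}$, which gives $|N(U)|\ge 2|U|\ge 2\ell$ (here one uses that distinct components send no edges to one another, so $N_3(U)\subseteq A^{*}$); for $U$ in the intermediate size range it follows from $\mathcal{B}_2$ exactly as in the connectivity case. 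The delicate part is a union carrying low-degree vertices: here the distance-$6$ property guarantees that distinct low-degree vertices have disjoint neighbourhoods avoiding one another, so the low-degree vertices contribute pairwise disjoint, nonempty pieces of $N(U)$; the same radius-$2$-ball count as above also rules out small isolated components. Reconciling these mechanisms across components of different sizes is what establishes Hall's condition, hence \eqref{RownanieGnPM}.

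The main obstacle is precisely this last step: verifying Hall's condition uniformly over all families of Gallai--Edmonds components, which requires controlling simultaneously the large components (via $\mathcal{B}_2$), the small all-high-degree unions (via $\mathcal{B}_{3,1}$ applied blockwise, so that sharing of representatives does no harm), and the components carrying low-degree vertices (via the distance-$6$ separation that makes their neighbourhoods disjoint). The connectivity direction is comparatively clean once the radius-$2$-ball counting is arranged. A secondary technical point common to both parts is that every structural event must be shown to persist after conditioning on $\delta\ge k$; this is exactly where the assumption that $\Pra{\delta(\rg(n))\ge k}$ is bounded away from zero is used.
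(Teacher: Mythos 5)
Your overall strategy---work in the coupling space, transfer the events through $\Gsuma\subseteq\rg(n)$, absorb the conditioning on $\delta\ge k$ using that $\Pra{\delta(\rg(n))\ge k}$ is bounded below, and split sets into high--$\Gsuma$-degree and low--$\Gsuma$-degree vertices handled by $\mathcal{B}_{3,k}$ and the distance-$6$ separation respectively---is exactly the paper's. Your connectivity argument is correct: the contradiction via a cut $T$, the radius-$2$-ball count giving $|A|\ge 3|A''|$, and the inequality $4k|A''|\le 2k|A'|\le (k-1)+|A''|$ all check out, though the paper gets the same conclusion more directly by proving the single uniform bound $|N_{\Gdelta}(S)|\ge |N_{\Gdelta}(S^+)|+k\max\{|S^-|-|S^+|,0\}\ge k|S|$ for all $S$ with $|S|\le n^{\gamma}$ and combining it with $\mathcal{B}_1$, $\mathcal{B}_2$ for larger sets.

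The genuine gap is in the perfect-matching part, and you flag it yourself: you set up the Gallai--Edmonds/Tutte--Berge decomposition and then leave unresolved the ``main obstacle'' of verifying Hall's condition uniformly over families of components, which is precisely the content of the claim. The paper avoids this entirely: from the same large/small split it derives the uniform expansion statement $|N_{\Gdelta}(S)|\ge\min\{|S|,4n\ln\ln n/\ln n\}\ge\min\{|S|,\alpha(\Gdelta)\}$ for all $S$ with $1\le|S|\le \tfrac{2}{3}\cdot 2n$ (this is \eqref{RownanieNPM}, using $\mathcal{B}_2$ for large $S$), and then invokes a known sufficient condition for a perfect matching in terms of expansion relative to the stability number (citing \cite{UniformMatchings}). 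The observation that would close your gap is the one hiding inside that cited lemma: picking one vertex from each odd component of $H-A^{*}$ yields an independent set, so the number $c$ of components satisfies $c\le\alpha(H)$, and then for any union $U$ of $\ell$ components the bound $|N(U)|\ge\min\{|U|,\alpha\}\ge\min\{\ell,\alpha\}=\ell$ gives Hall's condition at once---no separate treatment of low-degree vertices or blockwise application of $\mathcal{B}_{3,1}$ is needed. As written, your proof of \eqref{RownanieGnPM} is incomplete; also note that a low-degree vertex's guaranteed neighbour may lie inside its own component rather than in $A^{*}$, so the claim that low-degree vertices ``contribute pairwise disjoint, nonempty pieces of $N(U)$'' does not hold as stated.
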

In particular, if we substitute $\rg(n)=\Gsuma$ then by Lemmas~\ref{LematMinDegreeGsuma} and~\ref{LematGnCkPM} we obtain the following result.

\begin{lem}\label{LematGsumaCkPM}
Let $\pdwa$ and $\ptrzy$ fulfil \eqref{RownanieWarunekPdwaPtrzy}.
\begin{itemize}
\item[(i)] If 
$
\pdwa+\frac{n}{2}\ptrzy=(\ln n + c_n)/n
$ then
\begin{align*}
&\Pra{\Gsuma\in \mathcal{C}_1}\to f(c_n);\\
&\Pra{G_2(2n, \pdwa(2n))\cup G_3(2n, \ptrzy(2n))\in \mathcal{PM}}\to f(c_{2n}),
\end{align*}
where $f(\cdot)$ is defined by \eqref{RownanieFcn}.
\item[(ii)] If
$
\pdwa+\frac{n}{2}\ptrzy=(\ln n + (k-1)\ln \ln n +c_n)/n
$ and $c_n\to \infty$ then
\begin{align*}
&\Pra{\Gsuma\in \mathcal{C}_k}\to 1.
\end{align*}
\end{itemize}

\end{lem}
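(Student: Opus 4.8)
The plan is to apply Lemma~\ref{LematGnCkPM} with the choice $\rg(n)=\Gsuma$ and to feed it the minimum-degree thresholds supplied by Lemma~\ref{LematMinDegreeGsuma}. So the first task is to check that, for this particular $\rg(n)$, every hypothesis of Lemma~\ref{LematGnCkPM} is met. The coupling requirement \eqref{RownanieCouplingGsumaGn} is immediate: the identity coupling gives $\Gsuma\coup\Gsuma$ with probability $1$. The parameter hypotheses $\theta_n\to\theta>\tfrac12$ and $\theta'_n\to\theta'>\tfrac12$ also hold: since $\pdwa+\tfrac{n}{2}\ptrzy=(\ln n+c_n)/n$ (or with the extra $(k-1)\ln\ln n$ term), monotonicity of $\mathcal{C}_k$ and $\mathcal{PM}$ lets us restrict to $c_n=o(\ln n)$, whence $\theta_n=1+c_n/\ln n\to1$, while $\theta'>\tfrac12$ is assumed in \eqref{RownanieWarunekPdwaPtrzy}.

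The only genuinely nontrivial hypothesis to verify is the distance condition, that {\whp} any two vertices of degree at most $4k+14$ in $\Gsuma$ be at distance at least $6$ in $\rg(n)=\Gsuma$. But this is exactly property $\mathcal{B}_{4,C}$ of Lemma~\ref{LematWlasnosciGsuma}(iv) with $C=4k+14$, which holds {\whp} under the present parameters for any fixed $\gamma>1-\theta'$. With this, all hypotheses of Lemma~\ref{LematGnCkPM} are in place save the requirement that $\Pra{\delta(\rg(n))\ge k}$ be bounded away from zero, which I treat case by case.

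For the two $k=1$ statements I would use that $\mathcal{C}_1\subseteq\{\delta\ge1\}$ and $\mathcal{PM}\subseteq\{\delta\ge1\}$ (a connected graph, and one with a perfect matching, has no isolated vertex), so that
\begin{equation*}
\Pra{\Gsuma\in\mathcal{C}_1}=\Pra{\Gsuma\in\mathcal{C}_1\mid\delta(\Gsuma)\ge1}\,\Pra{\delta(\Gsuma)\ge1},
\end{equation*}
and likewise for $\mathcal{PM}$ on $2n$ vertices. Lemma~\ref{LematMinDegreeGsuma}(i) gives $\Pra{\delta(\Gsuma)\ge1}\to f(c_n)$ (resp.\ $f(c_{2n})$). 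If $c_n\to-\infty$ (resp.\ $c_{2n}\to-\infty$) then $f=0$ and the displayed product is forced to $0$, matching $f(c_n)$, and Lemma~\ref{LematGnCkPM} is not needed. Otherwise $f(c_n)$, hence $\Pra{\delta(\Gsuma)\ge1}$, is bounded away from zero, so the last hypothesis of Lemma~\ref{LematGnCkPM} holds and \eqref{RownanieGnCk}--\eqref{RownanieGnPM} make the conditional factor tend to $1$; multiplying gives $\Pra{\Gsuma\in\mathcal{C}_1}\to f(c_n)$ and the corresponding statement for $\mathcal{PM}$.

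For the $k$-connectivity statement with $c_n\to\infty$ the argument is the same but cleaner, as no vanishing case arises. Using $\mathcal{C}_k\subseteq\{\delta\ge k\}$ I write $\Pra{\Gsuma\in\mathcal{C}_k}=\Pra{\Gsuma\in\mathcal{C}_k\mid\delta(\Gsuma)\ge k}\,\Pra{\delta(\Gsuma)\ge k}$; Lemma~\ref{LematMinDegreeGsuma}(ii) gives $\Pra{\delta(\Gsuma)\ge k}\to1$ (so in particular bounded away from zero), and \eqref{RownanieGnCk} makes the conditional factor tend to $1$, whence the product tends to $1$. I expect the only delicate point to be the bookkeeping around the boundary case $c_n\to-\infty$ for $\mathcal{C}_1$ and $\mathcal{PM}$: there Lemma~\ref{LematGnCkPM} does not apply, since its conclusion is conditional on a non-degenerate minimum-degree event, so one must instead exploit directly that $\{\delta\ge1\}$ is necessary for both properties and let Lemma~\ref{LematMinDegreeGsuma}(i) supply the limit.
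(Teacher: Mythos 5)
Your proposal is correct and follows exactly the route the paper intends: the paper proves this lemma in one line by substituting $\rg(n)=\Gsuma$ into Lemma~\ref{LematGnCkPM} and invoking Lemma~\ref{LematMinDegreeGsuma}, and you have simply filled in the same verification (identity coupling, the distance condition via Lemma~\ref{LematWlasnosciGsuma}(iv), and the multiplicative decomposition over the minimum-degree event, with the degenerate case $c_n\to-\infty$ handled directly from $\mathcal{C}_1,\mathcal{PM}\subseteq\{\delta\ge 1\}$). No gaps.
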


\begin{proof}[Proof of Lemma~\ref{LematGnCkPM}]
Denote by $\Gdelta$ a graph $\rg(n)$ under condition $\delta(\rg(n))\ge k$. Let ${\cal A}$ ba a graph property.
In what follows we will use the fact that if $\Pra{\delta(\rg(n)\ge k)}$ is bounded away from zero and $\rg(n)$ has ${\cal A}$ {\whp}, then also $\Gdelta$ has property ${\cal A}$ {\whp}. 

From now on we assume that $\Gsuma$ and $\rg(n)$ are defined on the same probability space existing by \eqref{RownanieCouplingGsumaGn}.
We call a vertex $v\in\V$ small if its degree in $\Gsuma$ is at most $4k+13$. Otherwise we call $v$ a large vertex. 
Let $S\subseteq\V$ and $|S|\le n^{\gamma}$. Denote by $S^+$ and $S^-$ the subset of large and small vertices of $S$, respectively. 
Then by Lemma~\ref{LematWlasnosciGsuma}(iii) {\whp} 
$$
|N_{\Gdelta}(S^+)|\ge |N_3(S^+)|\ge 2k|S^+|.
$$
Moreover in $\Gdelta$ all vertices in $S^-$ have degree at least $k$ and {\whp} are at distance at least $6$. Therefore {\whp} in $\Gdelta$ no two vertices in $S^-$ are connected by an edge or have a common neighbour and at most $|S^+|$ of them have neighbours in $N_3(S^+)\cup S^+$ (otherwise they would be connected by a path of length at most $5$). Thus {\whp} 
$$
\forall_{S\subseteq \V, 1\le |S|\le n^{\gamma}}
|N_{\Gdelta}(S)|\ge |N_{\Gdelta}(S^+)|+k\max\{|S^-|-|S^+|,0\}\ge k|S|.  
$$ 
If we combine this with Lemma~\ref{LematWlasnosciGsuma}(i) and (ii) we get that {\whp}
\begin{align}
&|N_{\Gdelta}(S)|\ge k, 
\hspace{3.3cm}
\text{ for all }S\subseteq \V,\ 1\le |S|\le \frac{n}{2}\label{RownanieNCk};\\
&|N_{\rg(n)_{\delta\ge 1}}(S)|\ge \min\{|S|,4n\ln \ln n/\ln n\}\ge \min\{|S|,\alpha(\Gdelta)\},\label{RownanieNPM}\\
&\nonumber \hspace{6.2cm}\text{ for all }S\subseteq \V,\ 1\le |S|\le \frac{2n}{3};\\
&|N_{\rg(n)_{\delta\ge 2}}(S)|\ge 2|S|, 
\hspace{2.8cm}
\text{ for all }S\subseteq \V,\ 1\le |S|\le \frac{n}{4}\label{RownanieNHC}.
\end{align}
Finally
\eqref{RownanieGnCk} follows immediately by \eqref{RownanieNCk}. Moreover if \eqref{RownanieNPM} is fulfilled then $\rg(n)_{\delta\ge 1}$ has a perfect matching (for the proof see for example Lemma~1 in~\cite{UniformMatchings}). Therefore \eqref{RownanieGnPM} follows. \eqref{RownanieNHC} will be used later to establish threshold function for a Hamilton cycle. 
\end{proof}

\begin{lem}\label{LematGnHamilton}
Let $k$ be a positive integer, $\theta_n$ and $\theta_n'$ be sequences such that $\liminf_{n\to\infty}\theta_n = \theta>\frac{1}{2}$, $\liminf_{n\to\infty}\theta_n' = \theta'>\frac{1}{2}$, $\theta, \theta'=O(1)$, and 
$$
\pdwa+\frac{n}{2}\ptrzy=\frac{\theta_n\ln n}{n},\ \pdwa=\frac{\theta'_n\ln n}{n}
\text{ and }
\phat_4=\frac{512}{n}.
$$
Let moreover $\rg(n)$ be a random graph such that: 
\begin{itemize}
\item[(i)]
$
\Gsuma\coup \rg(n);
$
\item[(ii)]
{\whp} $\delta(\rg(n))\ge 2$;
\item[(iii)]
in a probability space existing by (i) all vertices of degree at most $21$ in $\Gsuma$ are at distance at least~$6$ in $\rg(n)$.
\end{itemize}
Then
$$
\Pra{\rg(n)\cup\Gn{\phat_4}\in \mathcal{HC}}\to 1\label{RownanieHC}.
$$
\end{lem}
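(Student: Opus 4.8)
The plan is to run Pósa's rotation--extension technique, using the long path guaranteed by Lemma~\ref{LematWlasnosciGsuma}(v) as a starting point and the edges of $\Gn{\phat_4}$ as \emph{boosters} that successively lengthen a longest path until a Hamilton cycle appears. The first step is to record that, {\whp}, $\rg(n)$ is connected and has the factor--two vertex expansion
\[
|N_{\rg(n)}(S)|\ge 2|S|\qquad\text{for all }S\subseteq\V,\ 1\le |S|\le n/4 .
\]
This is exactly \eqref{RownanieNHC} with $k=2$, and I would obtain it as there: by (i) we may assume $\Gsuma\subseteq\rg(n)$, so Lemma~\ref{LematWlasnosciGsuma}(i) covers the range $n^{\gamma}\le|S|\le n/4$, while for $1\le|S|\le n^{\gamma}$ one splits $S=S^+\cup S^-$ into vertices of $\Gsuma$--degree $\ge 4\cdot2+15$ and $\le 4\cdot2+14$, bounds $|N_3(S^+)|\ge 4|S^+|$ by Lemma~\ref{LematWlasnosciGsuma}(iii), and uses (ii) ($\delta(\rg(n))\ge 2$) together with (iii) (small vertices are pairwise at distance $\ge 6$, hence have disjoint neighbourhoods and at most $|S^+|$ of them meet $N_3(S^+)\cup S^+$) to see that the small vertices contribute $2\max\{|S^-|-|S^+|,0\}$ further neighbours. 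Connectivity then follows from this expansion together with Lemma~\ref{LematWlasnosciGsuma}(ii) for $n/4\le|S|\le 2n/3$. All the events involved hold {\whp}, and both expansion and connectivity are preserved under adding edges.

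By Pósa's lemma (see e.g.\ \cite{KsiazkaBollobas}), a connected graph satisfying $|N(S)|\ge 2|S|$ for all $|S|\le\lfloor n/4\rfloor$ is either Hamiltonian or has at least $\tfrac12\lfloor n/4\rfloor^2\ge n^2/32$ boosters, a booster being a non--edge whose insertion either lengthens a longest path or closes a Hamilton cycle. Since $\Gsuma\subseteq\rg(n)$, Lemma~\ref{LematWlasnosciGsuma}(v) supplies a path omitting only $L:=\lceil 8\ln2\cdot n/\ln n\rceil=o(n)$ vertices, so at most $L+1$ successive boosts are needed to produce a Hamilton cycle.

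The heart of the argument, and the main obstacle, is to realise these $L+1$ boosts using only the sprinkle $\Gn{\phat_4}$ of constant average degree $\phat_4=512/n$. I would split $\Gn{\phat_4}$ into $L+1$ independent copies of $\Gn{q}$ with $1-(1-q)^{L+1}=\phat_4$, whence $q=(1+o(1))\phat_4/L=\Theta(\ln n/n^2)$. Exposing the copies one at a time and conditioning on the current graph $G_i\supseteq\rg(n)$: as long as $G_i$ is not Hamiltonian it still has at least $n^2/32$ boosters, and the $i$--th copy misses all of them with probability at most $(1-q)^{n^2/32}\le\exp(-qn^2/32)=n^{-(2/\ln2)+o(1)}$. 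The constant $512$ is chosen precisely so that this exponent exceeds $1$ (against the $8\ln2$ coming from Lemma~\ref{LematWlasnosciGsuma}(v)), so a union bound over the $L+1<n$ rounds shows that {\whp} every round in which $G_i$ is not yet Hamiltonian contributes a booster; as the starting path misses only $L$ vertices, the graph becomes Hamiltonian within $L+1$ rounds. The delicate point is the sequential dependence between rounds, handled by revealing the copies one after another and recomputing the booster set for the current $G_i$; note that a sprinkle of density $o(1/n)$ would not survive the union bound, which is why the two ingredients---the long path capping the number of boosts at $o(n)$ and the explicit constant $512$---must be used in tandem.

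Finally, for the ``in particular'' claim I would write $\Gsuma$, under $\pdwa+\tfrac n2\ptrzy=(\ln n+\ln\ln n+c_n)/n$ with $c_n\to\infty$, as $\rg(n)\cup\Gn{\phat_4}$ with $\rg(n)=\Gn{\pdwa'}\cup\Gh{\ptrzy}$ and $1-(1-\pdwa')(1-\phat_4)=\pdwa$, so that the $\Gn{\phat_4}$--part is independent and $\pdwa'=\pdwa-\Theta(1/n)$. Since $\phat_4=o(\ln n/n)$, the pair $(\pdwa',\ptrzy)$ still meets the standing hypotheses with $\theta,\theta'>\tfrac12$, and $\pdwa'+\tfrac n2\ptrzy=(\ln n+\ln\ln n+(c_n-512))/n$ with $c_n-512\to\infty$; hence Lemma~\ref{LematMinDegreeGsuma}(ii) gives $\delta(\rg(n))\ge 2$ {\whp} (condition (ii)), Lemma~\ref{LematWlasnosciGsuma}(iv) with $C=22$ gives condition (iii), and (i) is immediate. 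The main statement then yields $\Gsuma=\rg(n)\cup\Gn{\phat_4}\in\mathcal{HC}$ {\whp}.
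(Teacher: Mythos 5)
Your proposal is correct and follows essentially the same route as the paper: both establish connectivity and the expansion $|N(S)|\ge 2|S|$ for $|S|\le n/4$ via \eqref{RownanieNHC}, start from the long path of Lemma~\ref{LematWlasnosciGsuma}(v), and sprinkle $\Gn{\phat_4}$ in $\Theta(n/\ln n)$ independent rounds of density $\Theta(\ln n/n^2)$ so that each non-Hamiltonian round hits one of the $\ge n^2/32$ P\'osa boosters with probability $1-O(n^{-2})$. The only cosmetic differences are that you spell out the booster count from P\'osa's lemma where the paper cites Theorem~8.9 of \cite{KsiazkaBollobas}, and you give the ``in particular'' reduction (splitting $\pdwa$ as $1-(1-\pdwa')(1-\phat_4)$) explicitly, which the paper leaves implicit.
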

In particular by Lemma~\ref{LematWlasnosciGsuma} we get the following simple corollary of Lemma~\ref{LematGnHamilton}.
\begin{cor}\label{CorGnHamilton}
Let $\pdwa$ and $\ptrzy$ be as in Lemma~\ref{LematGnHamilton}. 
If moreover
$$
\pdwa+\frac{n}{2}\ptrzy=(\ln n + \ln \ln n +c_n)/n
\text{ where } c_n\to\infty$$ then
\begin{align*}
&\Pra{\Gsuma\in \mathcal{HC}}\to 1. 
\end{align*}
\end{cor}

\begin{proof}[Proof of Lemma~\ref{LematGnHamilton}]
We will use the heuristic of the proof of Theorem~8.9  \cite{KsiazkaBollobas}.
Let $t=8n/\ln n$ and
$
\phat_{4,0}=64\ln n/n^2.
$
Then 
$
t\phat_{4,0}=\phat_4.
$
For any graph $G$
let $l(G)$ be the length of the longest path in $G$ and $l(G)=n$ if $G$ has a Hamilton cycle. 
We say that $G$ has property $\mathcal{Q}$ if 
$$
\text{$G$ is connected\quad and }\quad|N_G(S)|\ge 2|S|\text{, for all }S\subseteq\V,|S|\le n/4.
$$
In the proof we assume that $\Gsuma$ and $\rg(n)$ are defined on the probability space of the coupling existing by (i).
Let $\rg_0=\rg(n)$  and
$$
\rg_i=\rg_{i-1}\cup \Gn{\phat_{4,0}},\quad \text{ for }1\le i\le t.
$$
By Lemma~\ref{LematGnCkPM}, assumptions (i)--(iii), and \eqref{RownanieNHC} {\whp} $\rg(n)$ has property $\mathcal{Q}$. Moreover by Lemma~\ref{LematWlasnosciGsuma}(v) {\whp} 
$$l(\rg_0)\ge \left(1-\frac{8\ln 2}{\ln n}\right)n .$$ 
It may be shown (see (8.7) in \cite{KsiazkaBollobas}) that
\begin{multline*}
\Pra{l(\rg_i)
=n-t+i-1|l(\rg_{i-1})=n-t+i-1 \text{ and } \rg_{i-1} \text{ has $\mathcal{Q}$}}\\
\le(1-\phat_{4,0})^{n^2/32}
\le n^{-2}.
\end{multline*}
Thus
$$
\Pra{l(\rg_t)=n}\ge 1-\frac{t}{n^2}-o(1)=1-o(1).
$$
Since  
$$
\rg_t\coupling \rg(n)\cup \Gn{t\phat_{4,0}}, 
$$
this finishes the proof.
\end{proof}

\section{Sharp thresholds}

\begin{proof}[Proof of Theorems~\ref{TwierdzenieSpojnosc}--\ref{TwierdzenieHamilton}]
First let $S_1=n(\ln n + c_n)$ and $\pdwa$ and $\ptrzy$ be given by \eqref{RownanieHatp}. Note that then $\pdwa>\frac{1+\varepsilon}{2}\ln n$ for some constant $\varepsilon>0$. Therefore by Theorem~\ref{LematCoupling} and Lemma~\ref{LematGsumaCkPM}(i)
$$
\lim_{n\to\infty}\Pra{\Gnmp\in\mathcal{C}_1}\ge \lim_{n\to\infty}\Pra{\Gsuma\in\mathcal{C}_1}=f(c_n)
$$
and by Lemma~\ref{LematStopnie}
$$
\lim_{n\to\infty}\Pra{\Gnmp\in\mathcal{C}_1}\le \lim_{n\to\infty}\Pra{\delta(\Gnmp)\ge 1}= f(c_n).
$$ 
This implies Theorem~\ref{TwierdzenieSpojnosc}(i).
Similarly 
Theorem~\ref{TwierdzenieSpojnosc}(ii) follows by Theorem~\ref{LematCoupling},  Lemma~\ref{LematGsumaCkPM}(ii), and
Lemma~\ref{LematStopnie2};
Theorem~\ref{TwierdzeniePM} by Theorem~\ref{LematCoupling}, Lemma~\ref{LematGsumaCkPM}(i), and
Lemma~\ref{LematStopnie};
Theorem~\ref{TwierdzenieHamilton} by Theorem~\ref{LematCoupling},  
Corollary~\ref{CorGnHamilton} and
Lemma~\ref{LematStopnie2}.
\end{proof}

In the following proofs we will assume that $c_n=O(\ln n)$. In the other cases the theorems follow by Lemma~\ref{LematStopnie} and  Theorem~\ref{LematCoupling} \eqref{RownanieCoupling1} combined with known results concerning $\Gn{\phat}$.
\begin{proof}[Proof of Theorem~\ref{TwierdzenieHamiltonGp}]
Let $c_n\to -\infty$, then 
by Lemma~\ref{LematStopniek} 
$$
\Pra{\Gnmpp\in\mathcal{HC}}\le \Pra{\delta(\Gnmpp)\ge 2}\to 0.
$$

Let now $c_n\to\infty$. We will consider two cases:

\noindent{\bf CASE 1:} $m= \Omega (n/\ln n)$.

\noindent Let
$$m'=m\left(1+\frac{700}{\ln n}\right)^{-1}
\quad\text{and}\quad
m''=\frac{700}{\ln n} m'.
$$
Then for $p$ given by \eqref{RownanieHamiltonGp}
$$
m'p(1-(1-p)^{n-1})=
\ln n +\ln \max\left\{1,\frac{ npe^{-np}\ln n}{1-e^{-np}}\right\} - 700 + o(1) + c_n
$$
and 
\begin{equation}\label{Rownanienmpkwadrat}
nm'p^2=
\begin{cases}
O(\ln n)&\text{for }np=O(1),\\
\frac{n}{m'}(m'p)^2=O(\ln^3n)&\text{for }np\to\infty.
\end{cases}
\end{equation}
Let $\omega\to \infty$.
Define
\begin{align*}
\pdwa&=
\begin{cases}\frac{m'p(1-(1-p)^{n-1})-3m'p\left(\frac{1-(1-2p)^n}{2np}-(1-p)^{n-1}\right)-\omega\sqrt{\frac{\ln n}{n}}}{n},
\\
\hspace{3cm}\text{ for }nm'p\left(\frac{1-(1-2p)^n}{2np}-(1-p)^{n-1}\right)\gg\sqrt{n\ln n};\\
\frac{m'p(1-(1-p)^{n-1})-\omega\sqrt{\frac{\ln n}{n}}}{n},
\quad\text{ otherwise };
\end{cases}\\
\ptrzy&=
\begin{cases}
\frac{m'p\left(\frac{1-(1-2p)^n}{2np}-(1-p)^{n-1}\right)-\omega\sqrt{\frac{\ln n}{n}}}{\frac{n^2}{6}},\\
\hspace{3cm}\text{ for }nm'p\left(\frac{1-(1-2p)^n}{2np}-(1-p)^{n-1}\right)\gg\sqrt{n\ln n};\\
0,\hspace{2.7cm}\text{ otherwise };
\end{cases}
\\
\phat_4&=\textstyle\frac{m''p\left(1-\frac{1-(1-2p)^n}{2np}\right)-\omega\sqrt{\frac{1}{n}}}{n}\ge \frac{512}{n}.
\end{align*}
Let now $\rg(n)=\Gnmprim{m'}$. Then
by Theorem~\ref{LematCoupling}
\begin{align}
\Gsuma&\coup\Gnmprim{m'}=\rg(n)\label{RownanieCoupmprim};\\
\Gn{\phat_4}&\coup\Gnmprim{m''}\nonumber.
\end{align} 
Moreover by Lemma~\ref{LematStopniek} {\whp} $\delta(\rg(n))\ge 2$.  
Therefore assumptions (i) and (ii) in Lemma~\ref{LematGnHamilton} are fulfilled.

We are left with proving that (iii) is fulfilled. Let $C$ be a positive integer. We will show that in the probability space of the coupling~\eqref{RownanieCoupmprim} {\whp}
any two vertices of degree at most $C$ in $\Gsuma$ are at distance at least $6$ in $\Gnmprim{m'}=\rg(n)$.
Therefore we need to study the couplings described in the proof of Theorem~\ref{LematCoupling}. Recall that in the proof of Theorem~\ref{LematCoupling} it was shown that, if in all definitions of  $S_1$, $S_3$, $M_2$ and $M_3$ we replace $m$ by $m'$, then
\begin{align*}
&\Gsuma\\
&=\G{2}{\Po{\binom{n}{2}\ln(1-\pdwa)^{-1}}}\cup\G{3}{\Po{\binom{n}{3}\ln (1-\ptrzy)^{-1}}}
\\
&\coupling^{(i)}
\G{2}{\Po{\frac{S_1-3S_3-5\omega'\sqrt{S_1}}{2}}}\cup\G{3}{\Po{S_3-2\omega'\sqrt{S_1}}}\\
&\coup^{(ii)} 
\G{2}{\frac{S_1-3S_3-4\omega'\sqrt{S_1}}{2}}\cup\G{3}{S_3-\omega'\sqrt{S_1}}\\
&\coup^{(iii)} 
\G{2}{M_2}
\cup 
\G{3}{M_3}=\bigcup_{1\le i \le m'}\left(\G{2}{\textstyle{\frac{Y_i-3Z_i}{2}}}\cup \G{3}{Z_i}\right)\\
&\coupling^{(iv)}
\Gnmprim{m'}
\end{align*}
for some $\omega'=\Theta(\omega)$.

In the probability space of the above couplings, we will find a relation between the degree of a vertex $v\in\V$  in $\Gsuma$ and the number of those features $w_i\in W(v)$ that are chosen by at least two vertices (i.e. that contribute to at least one edge in $\Gnmprim{m'}$). For any $v\in\V$ in $\Gnmprim{m'}$ let 
$$W'(v)=\{w_i\in W(v): |V'_i|\ge 2\},$$
where $V'_i$ is defined in \eqref{RownanieVprim}.
Therefore we will find the relation between the degree of a vertex $v$ in $\Gsuma$ and $|W'(v)|$ in $\Gnmprim{m'}$. 

To this end we will need an insight into the construction of the intermediary couplings (i)--(iii). Recall that in the couplings, the construction of $\G{2}{M_2}\cup \G{3}{M_3}$ proceeds in $m'$ rounds. In the $i$-th round $\G{2}{\textstyle{\frac{Y_i-3Z_i}{2}}}\cup \G{3}{Z_i}$ is constructed in $\frac{Y_i-3Z_i}{2}+Z_i$ draws. In each draw a hyperedge of size $2$ or $3$ in $\mathbb{H}_{*2}(\frac{Y_i-3Z_i}{2})\cup \mathbb{H}_{*3}(Z_i)$  is chosen.  

Denote by $\mathcal{A}_v$ the event that while constructing $\G{2}{M_2}
\cup 
\G{3}{M_3}$ there are less than  $|W'(v)|-1$ draws in which  a hyperedge containing $v$ is chosen. 
Given $v\in \V$ and $1\le i\le m'$, denote by $\mathcal{A}_{v,i,1}$ the event that $v$ is an isolated vertex in $\G{2}{(Y_i-3Z_i)/2}\cup\G{3}{Z_i}$ and $\mathcal{A}_{v,i,2}$ the  event that $v\in V_i'$. Then 
$$\mathcal{A}_v\subseteq 
\bigcup_{i\neq j}
(\mathcal{A}_{v,i,1}\cap\mathcal{A}_{v,i,2})
\cap
(\mathcal{A}_{v,j,1}\cap\mathcal{A}_{v,j,2}).$$ 
For $i\neq j$, $(\mathcal{A}_{v,i,1}\cap\mathcal{A}_{v,i,2})$ and $(\mathcal{A}_{v,j,1}\cap\mathcal{A}_{v,j,2})$ are independent and $$\Pra{\mathcal{A}_{v,i,1}\cup\mathcal{A}_{v,i,2}}=1,$$ thus
\begin{align*}
\Pra{\mathcal{A}_v}
& \le 
\binom{m'}{2}\left(\Pra{\mathcal{A}_{v,1,1}\cap\mathcal{A}_{v,1,2}}\right)^2\\
& = 
\binom{m'}{2}\left(\Pra{\mathcal{A}_{v,1,1}}+\Pra{\mathcal{A}_{v,1,2}}-1\right)^2
\\
&\le\binom{m'}{2} 
\Bigg(
(1-p)^{n}
+np(1-p)^{n-1}
\\
&\hspace{2cm}+\sum_{y=2}^{n-1}
\binom{n}{y}p^y(1-p)^{n-y}
\left(1-\frac{1}{n}\right)^y
+p^{n}\cdot 0\\
&\hspace{2cm}
+\sum_{y=2}^{n}
\binom{n}{y}p^y(1-p)^{n-y}
\frac{y}{n}
-1
\Bigg)^2\\
&=\binom{m'}{2} 
\Bigg(
\sum_{y=0}^{n}
\binom{n}{y}p^y(1-p)^{n-y}
\left(1-\frac{1}{n}\right)^y
-p^{n}\\
&\hspace{2cm}+\sum_{y=1}^{n}
\binom{n}{y}p^y(1-p)^{n-y}
\frac{y}{n}
-1
\Bigg)^2\\
&\le\binom{m'}{2}
\left(
\left(1-\frac{p}{n}\right)^n
+p-1
\right)^2
\le (m')^2p^4
\end{align*}
Therefore by \eqref{Rownanienmpkwadrat}
\begin{equation}\label{RownanieAv}
\Pra{\bigcup_{v\in\V}\mathcal{A}_v}\le n(m')^2p^4=O\left(\frac{\ln^6n}{n}\right)=o(1).
\end{equation}
Thus in the probability space of the couplings, {\whp}  for all $v\in\V$   in the construction of $\G{2}{M_2}\cup\G{3}{M_3}$ the number of draws  in which a hyperedge containing $v$ is chosen is at least $|W'(v)|-1$. 

Now we will find the relation between $|W'(v)|-1$ and the number of draws in the construction of 
\begin{multline*}
\Gsuma\\
=\G{2}{\Po{\binom{n}{2}\ln(1-\pdwa)^{-1}}}\cup\G{3}{\Po{\binom{n}{3}\ln (1-\ptrzy)^{-1}}}.
\end{multline*}   
Note that in the couplings (i)--(iii) in order to construct a graph $\G{2}{M_2}
\cup 
\G{3}{M_3}$ from $\Gsuma$  one make some additional draws of hyperedges in auxiliary hypergraphs $\mathbb{H}_{*2}(n,\cdot)\cup \mathbb{H}_{*3}(n,\cdot)$. By the sharp concentration of the Poisson distribution and \eqref{RownanieYikoncentracja} {\whp} the number of additional draws is at most $K\omega\sqrt{n\ln n}$ for some constant $K$. Under condition that the number of additional draws is  at most $K\omega\sqrt{n\ln n}$,  probability that a hyperedge  containing $v$ is chosen in at least $3$ of the additional draws is at most.
$$
\binom{K\omega\sqrt{n\ln n}}{3}\left(\frac{2}{n}\right)^3=o(n^{-1}).
$$
Therefore by the union bound and \eqref{RownanieAv} {\whp} for all $v\in\V$ in the construction of $\Gsuma$  the number of draws in which a hyperedge containing $v$ is chosen is at least $|W'(v)|-3$. 

Now we will show the relation between the number of draws and the number of edges incident to $v$ in $\Gsuma$.
Note that the number of draws made in the construction of $\Gsuma$ is a random variable with the expected value at most $S_1=O(n\ln n)$. Therefore by Markov's inequality {\whp} the number of draws made in the construction of $\Gsuma$ is at most $n\ln^2 n$. In addition the probability that a pair $\{v,v'\}$ is contained in a hyperedge chosen in a given draw is at most $6/n(n-1)$. Therefore the probability that there exists an edge in $\Gsuma$ drawn at least $3$ times during the construction is at most
$$
\binom{n}{2}\binom{n\ln^2 n}{3}\left(\frac{6}{n(n-1)}\right)^3+o(1)=o(1)
$$   
Concluding, we have that {\whp} for all $v\in\V$ the number of edges incident to $v$ in $\Gsuma$ is at least half of the number of drawn hyperedges containing $v$, which is {\whp} at least $\frac{|W'(v)|-3}{2}$. Thus in the probability space of the couplings {\whp} for all $v\in\V$  of degree at most $C$ in $\Gsuma$ we have $|W'(v)|\le 2C+6$.

Finally,
probability that there are two vertices $v,v'$ such that $|W'(v)|\le 2C+6$ and $|W'(v')|\le 2C+6$ connected by a path of length at most $5$ in $\Gnmprim{m'}$ is at most
\begin{align*}
&n^2
\sum_{t=1}^{5}
(m')^tn^{t-1}p^{2t}\\
&\cdot\left(
\sum_{l=0}^{2C+6}
\binom{m'-t}{l}
(p-p(1-p)^{n-1})^l\left[(1-p)+p(1-p)^{n-1}\right]^{m'-O(1)}
\right)^2\\
&=O(1)n(\ln n)^{O(1)}
\exp\left(-2m'p(1-(1-p)^{n-1})\right)\\
&=o(1).
\end{align*}
This shows that assumption (iii) of Lemma~\ref{LematGnHamilton} is fulfilled. 

Therefore by Lemma~\ref{LematGnHamilton}
{\whp} 
$$
\rg(n)\cup\Gn{\frac{512}{n}}\in \mathcal{HC}.
$$
Thus in the case $m = \Omega(n/\ln n)$ the theorem follows by \eqref{RownanieCoupmprim} and a straight forward coupling of random intersection graphs. 
$$
\rg(n)\cup\Gn{\frac{512}{n}}
\coup
\Gnmprim{m'}\cup\Gnmprim{m''}
\coupling
\Gnmpp
$$ 

\noindent{\bf CASE 2:} $m = o (n/\ln n)$.\\
Note that in this case \eqref{RownanieHamiltonGp} and $c_n=O(\ln n)$ imply that
$$
p=\frac{\ln n + c'_n}{m}
$$
for some $c'_n\to\infty$. Let $m'$ be such that $m$ divides $m'$ and $m'\sim n/\ln n$.
Take an instance of $\mathcal{G}(n,m',p')$ with the set of features $\W'$ of size $m'$ and  
$$
p'=\frac{\ln n + c'_n}{m'}.
$$ 
Divide $\W'$ into $m$ groups of $m'/m$ features. Denote by $A_i$ the set of vertices which have chosen features from the $i$--th, $1\le i \le m$, group in $\mathcal{G}(n,m',p')$. $|A_i|$ has the binomial distribution $\Bin{n}{p''}$ for some $p''\le \frac{m'}{m}p'=p$. Now take an instance of $\mathcal{G}(n,m',p')$ and construct the sets $V_i$, $1\le i \le m$, in $\Gnmpp$ by taking $A_i$ and additionally adding to $V_i$ each vertex from $\V\setminus A_i$ independently with probability $(p-p'')/(1-p'')$. This coupling implies
$$
\mathcal{G}(n,m',p')\coupling \Gnmpp.
$$
From the considerations concerning the first case we have that {\whp}
$\mathcal{G}(n,m',p')\in\mathcal{HC}$. Therefore also $\Gnmpp\in\mathcal{HC}$ {\whp} . 
\end{proof}

\begin{proof}[Proof of Theorem~\ref{TwierdzenieSpojnosckGp}]
The technique of the proof is analogous to this of the proof of Theorem~\ref{TwierdzenieHamiltonGp}. We consider two cases $m=\Omega(n/\ln n)$ and $m=o(n/\ln n)$. In the first case the proof relies on Lemma~\ref{LematStopniek} and Lemma~\ref{LematGnCkPM}. Moreover we have to use the fact shown in the proof of Theorem~\ref{TwierdzenieHamiltonGp}, that {\whp} in the coupling constructed in the proof of Theorem~\ref{LematCoupling} {\whp} the vertices of degree bounded by a constant in $\Gsuma$ are at distance at least $6$ in $\Gnmpp$. The proof of the case $m=o(n/\ln n)$ is the same as in the proof of Theorem~\ref{TwierdzenieHamiltonGp}.   
\end{proof}

\section*{Acknowledgements}
Katarzyna Rybarczyk acknowledges a support of the National Science Center grant DEC--2011/01/B/ST1/03943.

\appendix

\section{Proofs of Lemma~\ref{LematStopnie2} and \ref{LematStopniek}}

\begin{proof}[Proof of Lemma~\ref{LematStopnie2}]
In the proof we use notation introduced in \eqref{RownanieTp}. Moreover let $a_n=S_{1,2}/S_1$.
\medskip

\noindent(i)
We restrict attention to the case $c_n=o(\ln n)$. In the latter case the result follows by Lemma~\ref{LematStopnie}.

First consider the case  $a_n=S_{1,2}/S_1\gg 1/\ln n$. Then $S_{1,2}\to \infty$. 
Take any probability space on which we define the coupon collector process on $\V$ and $\Gn{\phat_{2+}}$, in such a manner that they are independent.
Let $X_{+}$ be a random variable counting vertices which have not been collected during the coupon collector process in $T_{2+}$ draws and have degree at most $k-1$ in $\Gn{\phat_{2+}}$. If $$S_1=n(\ln n + (k-1)\ln (a_n\ln n) + c_n)$$ then for $\omega$ tending to infinity slowly enough
\begin{align*}
\frac{1}{n}T_{2+} + (n-1)\phat_{2+}
&=\frac{1}{n}\left(
S_1+O\left(\omega\sqrt{S_1}+\frac{S_1}{\omega\ln n}\right)\right)\\
&=\ln n + (k-1)\ln(a_n\ln n) + c_n + o(1)
\end{align*}
and
\begin{align*}
\phat_{2+}
\sim \frac{a_n \ln n}{n}.
\end{align*}
Therefore
\begin{align*}
\E X_+ 
=& n\left(1-\frac{1}{n}\right)^{T_{2+}}
\sum_{i=0}^{k-1}\binom{n-1}{i}\phat_{2+}^{i}(1-\phat_{2+})^{n-i-1}
\\
\nonumber \sim&  n\frac{1}{(k-1)!}(a_n\ln n)^{k-1} \exp(-\ln n - (k-1)\ln(a_n\ln n) - c_n)\\
\sim& \frac{e^{-c_n}}{(k-1)!}\\
\intertext{and}
\E X_+(X_+-1)
\sim& n^2\left(1-\frac{2}{n}\right)^{T_{2+}}(1-\phat_{2+})
\left(\sum_{i=0}^{k-1}\binom{n-2}{i}\phat_{2+}^{i}(1-\phat_{2+})^{n-i-2}\right)^2
\\
\nonumber&\ +n^2\left(1-\frac{2}{n}\right)^{T_{2+}}
\phat_{2+}\left(\sum_{i=0}^{k-2}\binom{n-2}{i}\phat_{2+}^{i}(1-\phat_{2+})^{n-i-2}\right)^2\\
\nonumber \sim& \left(\frac{e^{-c_n}}{(k-1)!}\right)^2.
\end{align*}
Thus by the second moment method {\whp} $X_+>0$ as $c_n\to -\infty$. Note that if there is a vertex which is isolated in $\bigcup_{t=3}^{k+1}\Gnm{t}{\overline{p}}$ and has degree at most $k-1$ in $\Gnm{2}{\overline{p}}$, then $\delta(\Gnmp)\le k-1$.  By Facts~\ref{FaktStopnieCoupling} and~\ref{FaktCollectorPlus} there is a probability space such that {\whp} $\Gnm{2}{\overline{p}}\subseteq \Gn{\phat_{2+}}$, the number of isolated vertices in $\bigcup_{t=3}^{k+1}\Gnm{t}{\overline{p}}$ is at least the number of non--collected coupons after $T_{2+}$ draws, and the coupon collector process and $\Gn{\phat_{2+}}$ are independent. Thus $X_+>0$ implies that {\whp} $\delta(\Gnmp)\le k-1$.

Now let $S_{1,2}/S_1= O ((\ln n)^{-1})$. Then $
S_1=n(\ln n + c_n + O(1)).
$
Therefore for the result follows by Lemma~\ref{LematStopnie}.

\medskip

\noindent(ii)  
In the proof we restrict our attention to the case $c_n=O(\ln n)$. In the latter case the result follows after combining \eqref{RownanieCoupling1} with known results on $\Gn{\phat}$.

Let  $a_n=S_{1,2}/S_1\gg 1/\ln n$ (thus $S_{1,2}\to\infty$ and $a_n\ln n\to\infty$). Consider any probability space on which we define the coupon collector process on $\V$ and $\Gn{\phat_{2-}}$, in such a manner that they are independent.
Let $X_{-}$ be a random variable defined on this probability space and counting vertices which have not been collected during the coupon collector process with $T_-$ draws and have degree at most $k-1$ in $\Gn{\phat_{2-}}$. 
If $S_1-\sum_{t=3}^{k}S_{1,t}=n(\ln n + (k-1)\ln (a_n\ln n) + c_n)$ then for $\omega$ tending to infinity slowly enough
\begin{align*}
\frac{1}{n}T_- + (n-1)\phat_{2-}
&\ge\frac{1}{n}\left(
S_1-\sum_{t=3}^{k-1}S_{1,t}+O\left(\omega\sqrt{S_1}+S_{1}^2n^{-2}\right)\right)\\
&=\ln n + (k-1)\ln(a_n\ln n) + c_n + o(1).
\end{align*}
Thus
\begin{align*}
\E X_- 
&= n\left(1-\frac{1}{n}\right)^{T_{-}}
\left(\sum_{i=0}^{k-1}\binom{n-1}{i}\phat_{2-}^{i}(1-\phat_{2-})^{k-1-i}\right)
(1-\phat_{2-})^{n-k}\\
\nonumber
&= O(1)  n\frac{1}{(k-1)!}(a_n\ln n)^{k-1} \exp(-\ln n - (k-1)\ln(a_n\ln n) - c_n)\\
&= O(1) \frac{e^{-c_n}}{(k-1)!}
\end{align*}
Therefore {\whp} $X_-=0$, i.e. {\whp} each vertex is collected in $T_-$ draws or has degree at least $k$ in  $\Gn{\phat_{2-}}$.  Note that if each vertex is non-isolated in $\Gnm{k+1}{\overline{p}}$  or has degree at least $k$ in $\Gnm{2}{\overline{p}}$, then $\delta(\Gnmp)\ge k$. Therefore Facts~\ref{FaktStopnieCoupling} and~\ref{FaktCollectorMinus} imply that {\whp} $\delta(\Gnmp)\ge k$.

Now let $S_{1,2}/S_1= O ((\ln n)^{-1})$, i.e. $S_{1,2}=O(n)$. Then $$
S_1-\sum_{t=2}^{k-1}S_{1,t}=n(\ln n + c_n + O(1))
$$
and
$$
T_-
=n(\ln n + c_n + O(1)). 
$$
Thus by Fact~\ref{FaktCollectorMinus} {\whp} there are no isolated vertices in $\Gnm{k+1}{\overline{p}}$, i.e. {\whp} $\delta(\Gnmp)\ge k$.
\end{proof}

\begin{proof}[Proof of Lemma~\ref{LematStopniek}]
We use notation from \eqref{RownanieTp}. Moreover let
$$
a_n=
(np)^{k-1} \left( \left(\frac{e^{-np}\ln n}{1-e^{-np}}\right)^{k-1}+\frac{e^{-np}\ln n}{1-e^{-np}}\right).
$$
Note that 
\begin{multline}\label{RownanieS1tS1}
S_{1,t}\sim m\frac{(np)^{t}}{(t-1)!}(1-p)^{n}
\sim ((t-1)!)^{-1}\frac{(np)^{t-1}(1-p)^{n}}{1-(1-p)^{n-1}}S_1,\\
\quad\text{for }t=2,\ldots,k, 
\end{multline}
where $S_1$ and $S_{1,t}$ are defined by \eqref{RownanieS}. Therefore if $np^2=o(1)$ then
$$
\left(\frac{S_{1,2}}{S_1}\ln n\right)^{k-1} + (k-1)!\frac{S_{1,k}}{S_1}\ln n
\sim 
a_n.
$$
We will divide the proof into cases depending on the value of $S_{1,t}/S_1$, for $t=2,k$. 
\smallskip

\noindent {\bf CASE 1.} $S_{1,2}/S_1=O(1/\ln n)$ and $S_{1,k}/S_1=O(1/\ln n)$. 
\smallskip

Note that by \eqref{RownanieS1tS1} this case includes $np^2=\Omega(1)$. Moreover in this case $a_n=O(1)$ and $\frac{S_{1,t}}{S_1}\ln n=O(1)$ for all $2\le t\le k$. Therefore the lemma follows by Lemma~\ref{LematStopnie2}. 
\smallskip

\noindent {\bf CASE 2.} $S_{1,2}/S_1\gg 1/\ln n$ or $S_{1,k}/S_1\gg 1/\ln n$. 
\smallskip

Let $c_n\to -\infty$. If $S_{1,k}=O(S_{1,2})$ then the result follows by Lemma~\ref{LematStopnie2}. In the case  
$S_{1,k}\gg S_{1,2}$ we may restrict attention to $c_n=o(\ln n)$ (for explanation see the proof of  Lemma~\ref{LematStopnie2}(i)). 
Since $S_1=\Theta(n\ln n)$ and $S_{1,k}\gg S_1/\ln n$ we have $S_{1,k}\to\infty$. Thus we may apply the second part of Fact~\ref{FaktStopnieCoupling}.

Take any probability space on which we define independent coupon collector process on $\V$ and $G_{k}(n,\phat_{k+})$.
Let $X_{+}$ be a random variable equal to the number of vertices which have not been collected during the coupon collector process in $T_{k+}$ draws and have degree at most $k-1$ in $G_k(n,\phat_{k+})$. For $\omega$ tending to infinity slowly enough

\begin{align*}
\frac{1}{n}T_{k+} + \binom{n-1}{k-1}\phat_{k+}
&=\frac{1}{n}\left(
S_1+O\left(\omega\sqrt{S_1}+S_{1}^2n^{-2}+\frac{S_1}{\omega\ln n}\right)\right)\\
&=\ln n + \ln a_n  + c_n + o(1),
\end{align*}
thus
\begin{align*}
\E X_+ 
=& n\left(1-\frac{1}{n}\right)^{T_{k+}}\\
&\cdot
\left(
(1-\phat_{k+})^{\binom{n-1}{k-1}}
+
\binom{n-1}{k-1}\phat_{k+}(1-\phat_{k+})^{\binom{n-1}{k-1}-1}
\right)
\\
\sim&  n\frac{1}{(k-1)!}a_n \exp(-\ln n - \ln a_n - c_n)\sim \frac{e^{-c_n}}{(k-1)!}\\
\intertext{and}
\E X_+(X_+-1)
\sim& 
n^2
\left(1-\frac{2}{n}\right)^{T_{k+}}\\
&\cdot\Big[
\left(
1
+
\binom{n-2}{k-1}\phat_{k+}(1-\phat_{k+})^{-1}
\right)^2
(1-\phat_{k+})^{2\binom{n-1}{k-1}-\binom{n-2}{k-2}}\\
&
+
\binom{n-2}{k-2}\phat_{k+}
(1-\phat_{k+})^{2\binom{n-1}{k-1}-\binom{n-2}{k-2}-1}
\Big]\\
\sim& \left(\frac{e^{-c_n}}{(k-1)!}\right)^2.
\end{align*}
Thus {\whp} $X_+>0$ as $c_n\to -\infty$ (i.e. {\whp} there exists a vertex, which has degree at most $k-1$ in $G_k(n,\phat_{k+})$ or has not been collected in $T_{k+}$ draws). Thus by Facts~\ref{FaktStopnieCoupling} and~\ref{FaktCollectorPlus} {\whp} $\delta(\Gnmpp)\le k-1$.

\medskip

Now assume that $c_n\to \infty$. As it is explained in the proof of Lemma~\ref{LematStopnie2} we may restrict considerations to the case $c_n=O(\ln n)$. Recall that we assume that $S_{1,2} \gg S_1/\ln n$ or $S_{1,t}\gg S_1/\ln n$, i.e. $a_n\to\infty$.  
Note that by definition
$$
\hat{q}_{t}\le \frac{S_{1,t}}{t\binom{n}{t}}= O\left(mp^{t}e^{-np}\right).
$$
Take any probability space on which we may define independent coupon collector process with the set of coupons $\V$ and $\bigcup_{t=2}^{k}G_{t}(n,\hat{q}_{t})$. 
Let $X_{-}$ be a random variable equal to the number of vertices which have not been collected during the coupon collector process in $T_-$ draws and have degree at most $k-1$ in $\bigcup_{t=2}^{k}G_t(n,\hat{q}_{t})$. 
Note that if $v$ has degree at most $k-1$ in $\bigcup_{t=2}^{k}G_t(n,\hat{q}_{t})$, then for some $0\le k_0\le k-1$ and some sequence of integers $r_2,\ldots,r_t$ such that $\sum_{t=2}^{k}(t-1)r_t=k_0$  
\begin{itemize}
\item[(i)] there is a set $\V'\subseteq \V\setminus\{v\}$ of $k_0$ vertices such that for each $t$ there are $r_t$ hyperedges in $H_t(n,\hat{q}_{t})$ (generating $G_t(n,\hat{q}_{t})$) containing $v$ and contained in $\V'\cup\{v\}$.
\item[(ii)] 
and all hyperedges in $\bigcup_{t=2}^{k}H_t(n,\hat{q}_{t})$ containing $v$ are subsets of $\V'\cup\{v\}$.
\end{itemize}
Let $r=\sum_{t=2}^{k}r_t$, then for $c_n=O(\ln n)$ event (i) occurs with probability at most
\begin{align*}
& \binom{n-1}{k_0}\prod_{t=2}^{k}\binom{k_0-1}{t-1}^{r_t}\left(O(1)(mp^{t})e^{-np}\right)^{r_t}\\
&=O(1)n^{k_0}m^{r}p^{k_0+r}\left(e^{-np}\right)^{r}\\
&=O(1)(np)^{k_0}(mpe^{-np})^{r}\\
&=O(1)(np)^{k_0}\left(\frac{\ln n}{(1-e^{-np})}e^{-np}\right)^r\\
&=O(a_n).
\end{align*}
There are $O(1)$ sequences of integers $r_2,\ldots,r_t$ such that $\sum_{t=2}^{k}(t-1)r_t\le k-1$, thus the probability that there exists such sequence $r_2,\ldots,r_t$ that (i) and (ii) occur is at most 
$$
O(a_n)\prod_{t=2}^{k}(1-\hat{q}_{t})^{\binom{n-1}{t-1}-\binom{k-1}{t-1}}.
$$
Moreover
\begin{align*}
\frac{1}{n}T_- + \sum_{t=2}^{k}\binom{n-1}{t-1}\hat{q}_{t}
&\ge\frac{1}{n}\left(
S_1-O\left(\omega\sqrt{S_1}+S_{1}^2n^{-2}\right)\right)\\
&=\ln n + \ln a_n + c_n + o(1).
\end{align*}
Therefore finally
\begin{align*}
\E X_- 
&= n\left(1-\frac{1}{n}\right)^{T_{-}}
O(a_n)\prod_{t=2}^{k}(1-\hat{q}_{t})^{\binom{n-1}{t-1}-\binom{k-1}{t-1}}=o(1).
\end{align*}
Concluding, {\whp} $X_-=0$. By Facts~\ref{FaktStopnieCoupling} and~\ref{FaktCollectorMinus} {\whp} $\delta(\Gnmpp)\ge k$.
\end{proof}

\end{document}